\newtheorem{thm}{Theorem}
\newtheorem{cor}[thm]{Corollary}
\newtheorem{lem}[thm]{Lemma}
\newtheorem{prop}[thm]{Proposition}
\newtheorem{claim}[thm]{Claim}
\newtheorem{defn}[thm]{Definition}
\theoremstyle{definition}
\newtheorem{rem}{Remark}
\newtheorem{ques}{Question}
\newtheorem{notation}{Notation}
\newcommand{\nn}{\mathbb{N}}
\newcommand{\qq}{\mathbb{Q}}
\newcommand{\ee}{\varepsilon}
\newcommand{\bt}{\nn^{<\nn}}
\newcommand{\seg}{\mathfrak{s}}
\newcommand{\aaa}{\mathcal{M}}
\newcommand{\ttt}{\mathcal{T}}
\newcommand{\supp}{\mathrm{supp}}
\newcommand{\range}{\mathrm{range}}
\newcommand{\sspan}{\mathrm{span}}
\begin{document}

\title{On spaces admitting no $\ell_p$ or $c_0$ spreading model}
\author{Spiros A. Argyros and Kevin Beanland}

\address{National Technical University of Athens, Faculty of Applied
Sciences, Department of Mathematics, Zografou Campus, 157 80, Athens, Greece.}
\email{saargyros@math.ntua.gr}

\address{Department of Mathematics and Applied Mathematics, Virginia Commonwealth
University, Richmond, VA 23284.}
\email{kbeanland@vcu.edu}

\thanks{The second author acknowledges support from the Fulbright Foundation - Greece and the Fulbright
program.}

\thanks{2010 \textit{Mathematics Subject Classification}. Primary: 46B20, Secondary: 46B45}
\thanks{\textit{Key words}: spreading models, quotients of Banach spaces.}
\maketitle

%------------------------Abstract-------------------------------%

\begin{abstract}
It is shown that for each separable Banach space $X$ not admitting
$\ell_1$ as a spreading model there is a space $Y$ having $X$ as a quotient
and not admitting any $\ell_p$ for $1 \leq p < \infty$ or $c_0$ as a spreading model.

We also include the solution to a question of W.B. Johnson and H.P. Rosenthal on the existence
of a separable space not admitting as a quotient any space with separable dual.
\end{abstract}

\section{Introduction}

A Banach space $X$ is said to have, for $1 \leq p < \infty$, an $\ell_p$ spreading model if
there is a $\delta >0$ and a sequence $(x_n)$ in $X$
such that for all $n \in \nn$, $n\leq \ell_1 < \cdots < \ell_n$ and $(a_i)_i \in c_{00}$,
$$\delta \|(a_i)_{i=1}^n\|_p  \leq \|\sum_{i=1}^n a_i x_{\ell_i} \| \leq  \frac{1}{\delta} \|(a_i)_{i=1}^n\|_p.$$
For $p=\infty$ we say $X$ has a $c_0$ spreading model.  The first example of a space not admitting any $\ell_p$ or $c_0$
as a spreading model was provided by E. Odell and Th. Schlumprecht in \cite{OSKrivine}.  This space $X_S$ is the completion of $c_{00}(\nn)$ under
a norm that is a modification of the norm of Schlumprecht's space $S$.  As with the norming set of $S$, the norming set of $X_S$ is
defined using the saturation method. In the case of $X_S$, the norming set includes $\ell_2$ convex combination of certain
weighted functionals at every step of its, inductive, construction.  The idea of including this type of structure in
a given norming set can be
traced back to work of R.C. James \cite{JamesUncond} and can also be found in the W.T. Gowers' construction \cite{GowersTree}
of a space not containing $c_0$, $\ell_1$ or a reflexive subspace.
Recently, in \cite{AMP}, it was shown that there exist hereditarily indecomposable spaces not
admitting any $\ell_p$ or $c_0$ as a spreading model.  In \cite{AOST}, the authors construct
a space not admitting an $\ell_p$, $c_0$ or reflexive spreading model.  In paper \cite{AKTkspreading} they
show that a variant of the space $X_S$ does not admit any $\ell_p$ or $c_0$ as a $k$-iterated
spreading model for any $k \in \nn$.

In \cite{ATMemoirs} it is shown that every separable Banach space either contains $\ell_1$ or is a quotient of
a hereditarily indecomposable space. The main theorem of this paper is a similar dichotomy
for spaces that do not admit $\ell_1$ spreading models.
By the well-known lifting property of $\ell_1$, if a space $X$ admits an $\ell_1$ spreading model, then any $Y$ having
$X$ as a quotient must also admit an $\ell_1$ spreading model.   More precisely, our main theorem is the following
dichotomy.

\begin{thm}
Let $X$ be a separable Banach space.  Exactly one of the following holds:
\begin{enumerate}
\item $X$ admits an $\ell_1$ spreading model.
\item There is a separable space $Y$ not admitting any $\ell_p$ for $1\leq p <\infty$ or $c_0$ as a spreading model such that $X$ is a quotient of $Y$.
\end{enumerate}
\label{dichotomy}
\end{thm}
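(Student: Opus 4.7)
The implication $(1)\Rightarrow\neg(2)$ is an immediate consequence of the lifting property of $\ell_1$. Indeed, if $(x_n)\subset X$ generates an $\ell_1$ spreading model with constant $\delta>0$ and $Q:Y\to X$ is any quotient, choose a bounded lift $(y_n)\subset Y$ with $Qy_n=x_n$; then $\|\sum a_i y_{\ell_i}\|_Y \ge \|Q\|^{-1}\|\sum a_i x_{\ell_i}\|_X \ge \|Q\|^{-1}\delta\|(a_i)\|_1$, while the upper $\ell_1$ estimate is immediate from $\sup_n\|y_n\|<\infty$, so $Y$ also admits an $\ell_1$ spreading model. The content of the theorem is therefore the converse, and the plan is to combine the Argyros--Tolias quotient technology from \cite{ATMemoirs} with the Odell--Schlumprecht ground norm of \cite{OSKrivine}.

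Assume $X$ admits no $\ell_1$ spreading model and, using separability, fix a sequence $(d_n)\subset B_X$ whose symmetric convex hull is dense in $B_X$. I would define $Y$ as the completion of $c_{00}(\nn)$ in the norm $\|y\|=\sup_{f\in K}|f(y)|$, where the norming set $K\subset B_{\ell_\infty}$ is the smallest symmetric set containing $\{\pm e_n^\ast:n\in\nn\}$, closed under the admissible $\ell_2$-weighted averages from the Odell--Schlumprecht saturation rule (with the Schreier-type admissibility on supports used in \cite{OSKrivine}), and additionally containing the coding functionals $\Psi(x^\ast)$ for $x^\ast\in B_{X^\ast}$, defined by $\Psi(x^\ast)(n):=x^\ast(d_n)$. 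Since $|x^\ast(d_n)|\le 1$, the coding functionals indeed lie in $B_{\ell_\infty}$; the inclusion $\Psi(B_{X^\ast})\subset K$ immediately gives $\|Qy\|_X\le\|y\|_Y$ for the formal map $Qe_n=d_n$, and density of $\mathrm{conv}(\pm d_n)$ in $B_X$ then makes $Q$ a quotient by the usual doubling argument.

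The real work is to show that $Y$ admits no $\ell_p$ ($1\le p<\infty$) or $c_0$ spreading model. The Odell--Schlumprecht saturation dictates the norm growth of block sums: for any seminormalized weakly null block sequence in $Y$, a suitable rapidly increasing subsequence admits $d$-term partial sums of norm comparable to $d/f(d)$ for the Schlumprecht function $f$, a growth rate incompatible with $\ell_p$ for every $1\le p<\infty$ and with $c_0$. The principal obstacle is the coding layer: if a weakly null block sequence $(y_k)\subset Y$ had images $(Qy_k)\subset X$ equivalent to the $\ell_1$-basis, the functionals $\Psi(x^\ast)$ would witness an $\ell_1$ spreading model in $Y$. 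The hypothesis on $X$ is tailored precisely to prevent this, since no $\ell_1$ spreading model in $X$ means that averages of $(Qy_k)$ can be driven to zero and the corresponding coding contribution to block sums in $Y$ becomes negligible. The delicate tree-analysis needed to show that the interaction between the $\ell_2$-saturation and the $\Psi$-coding never reintroduces an $\ell_p$ or $c_0$ estimate, carried out along the lines of the HI-quotient analysis in \cite{ATMemoirs}, is the main technical challenge.
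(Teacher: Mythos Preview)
Your construction has a genuine gap, and it is not just a matter of deferred tree-analysis. The coding functionals $\Psi(x^*)$ you introduce have \emph{infinite} support (generically $x^*(d_n)\neq 0$ for all $n$), so they cannot participate in the saturation: one cannot form ``successive'' or ``admissible'' combinations involving a $\Psi(x^*)$. Your norming set is therefore effectively $K=K_{OS}\cup\Psi(B_{X^*})$, and the resulting norm is $\|y\|_Y=\max\bigl(\|y\|_{OS},\,\|Qy\|_X\bigr)$. Now take $X=\ell_2$ (which has no $\ell_1$ spreading model). For each $k$ pick a large block set $F_k$ of indices $n$ with $d_n$ close to the $k$-th unit vector of $\ell_2$, and set $y_k=\frac{1}{|F_k|}\sum_{n\in F_k}e_n$. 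Then $Qy_k$ is close to $e_k^{(\ell_2)}$, so $\|y_k\|_Y\approx 1$, while $\|y_k\|_{OS}\to 0$ (long flat averages are small in any Schlumprecht-type norm). For such a sequence the $OS$-part of $\|\sum a_k y_k\|_Y$ is negligible and the $Q$-part is $\approx\|(a_k)\|_2$: you have produced an $\ell_2$ spreading model in $Y$. The same mechanism imports into $Y$ whatever $\ell_p$ or $c_0$ spreading model $X$ happens to have; the hypothesis only excludes $\ell_1$. Your lower bound ``$d$-term sums grow like $d/f(d)$'' fails here precisely because the functionals norming the $y_k$ are $\Psi$'s, which cannot be combined into an admissible weighted sum in $K$.

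The paper avoids this by forcing the ground functionals to be finitely supported and to serve as the \emph{terminal nodes} of the saturation tree. This is the reason for the two preliminary passages: first from $X$ to a space $Z_X$ with a bimonotone basis (so that interval projections are contractive), and then to the ground set $G_Z\subset c_{00}$ built from interval-restricted dual elements and a partition $(\Lambda_i)$. Because $G_Z$ consists of finitely supported, successively combinable functionals, any vector normed by the ground level can be fed into the $(m_j,n_j)$-operation, and the lower estimate $\|\sum_{k=1}^{n_i}y_k\|\ge n_i/m_i$ holds for \emph{every} normalized block sequence, killing $\ell_p$ ($p>1$) and $c_0$. The price is that the quotient property is no longer the ``usual doubling argument'': one must manufacture specific long averages $x=\frac{1}{n_{j_0}}\sum_t y_t$ and prove $\|x\|\le 1$ via a tree decomposition (Lemmas~\ref{SpirosLemma1}, \ref{SpirosLemma2} and Proposition~\ref{thequotient}). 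That is where the hard work actually lies, and it is not visible in your outline.
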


We outline the proof of the above theorem:  The first step is to pass from a separable
space $X$ not admitting an $\ell_1$ spreading model to a space $Z_X$ with a bimonotone Schauder basis, having $X$ as a quotient and not having
an $\ell_1$ spreading model.  The second step is to show that for any space $Z$ with a bimonotone Schauder basis
and not having an $\ell_1$ spreading model, one can construct a {\em ground set} $G_Z \subset c_{00}$ such that the space $Y_{G_Z}$,
having $G_Z$ as its norming set, also does not have $\ell_1$ as a spreading model.  After this, using the method in
\cite{OSKrivine}, we construct a space $T_{G_Z,2}$ not having any $\ell_p$ or $c_0$ as a spreading model.
The final, and most difficult, step is to show that the space $T_{G_Z,2}$ has $Z$ as a quotient.

The paper is organized as follows.  In section 2 we give several definitions including the definition
of a ground set $G_Z$ determined by a space with a basis $Z$.
We also prove the first two steps stated above.  In section 3
we define, for a space $Z$ with a basis, the space $T_{G_Z,2}$ and show that it does not admitting any $\ell_p$ or $c_0$
spreading model.  In section
4 we prove that $T_{G_Z,2}$ has $Z$ as a quotient.  We conclude by combining
the above to prove our main result and showing that if a space $X$ has as a quotient every space not
admitting an $\ell_1$ spreading model, then $X$ contains $\ell_1$.

The final section includes a result that is independent from the rest of the paper.  
Namely, we observe that a space constructed in \cite{AAT} does not admit as quotient any space with separable dual.
This solves a question posed in \cite[page 86, Remark IV.1]{JR}. We thank W.B. Johnson for
bringing this problem to our attention and simplifying our original solution.

\section{Spaces having no $\ell_1$ spreading model}

Let $c_{00}$ be the vector space of all finitely supported scalar sequences and $(e_n)$ denote
the unit vector basis of $c_{00}$.
Suppose $X$ has a Schauder basis $(x_n)_{n \in \nn}$. Let $(x_n^*)$ be the biorthogonal functionals
of $(x_n)$.  For $x \in \sspan{(x_i)_{i=1}^\infty}$ let $\supp(x) =\{ i : x^*_i(x) \not=0\}$.  Let $B_X=\{x \in X: \|x\|\leq 1\}$
and $S_X =\{x \in X: \|x\|=1\}$.

Our first definition can be found in \cite[Definition 14.1]{ATMemoirs}.

\begin{defn}
Let $Z$ be a space with a bimonotone Schauder basis $(z_i)_{i \in \nn}$ and $(\Lambda_i)_{i \in \nn}$ be a partition
of $\nn$ such that each $\Lambda_i$ is infinite.  Define  $G_Z \subset c_{00}$ as follows:
\begin{equation}
\begin{split}
G_Z =\{ \sum_{i=1}^d a_i (\sum_{n \in E \cap \Lambda_i} e_n^*) : &~ (a_i)_{i=1}^d \subset \qq,~ \|\sum_{i =1}^d a_i z^*_i \| \leq 1\\
& ~E \mbox{ finite interval of } \nn\}.
\label{groundZ}
\end{split}
\end{equation}
$G_Z$ is an example of a {\em ground set}.  Let
$Y_{G_Z}$ be the Banach space that is the completion of $c_{00}$ with the norming set $G_Z$ and $(y_n)$ denote
it natural basis.
\end{defn}

The space $Z$ is naturally a quotient of $Y_{G_Z}$.  In the next definition, we define the map.  In Proposition \ref{manyitems}
we will show it is a quotient map.

\begin{defn}
Define $Q_{G_Z}: Y_{G_Z} \to Z$ by
\begin{equation}
Q_{G_Z}y_n = e_i \mbox{ for }n \in \Lambda_i.
\end{equation}
Notice that for each $i \in \nn$ and $(a_j)_{j \in \Lambda_i}$ we have
\begin{equation}
Q_{G_Z} (\sum_{j \in \Lambda_i} a_j y_j) = (\sum_{j \in \Lambda_i }a_j)e_i. \label{imageQ}
\end{equation}
\label{QZ}
\end{defn}

For an arbitrary separable space $X$ we can construct a space $Z_X$ with a basis that retains many properties of $X$.  The
following construction can be found in \cite{ATMemoirs} (also see \cite{Schechtman}).
\begin{defn}
Let $X$ be a separable Banach space.  Let $R: \ell_1 \to X$ be a bounded linear operator such that
$(Re_n)_{n =1}^\infty$ is a dense subset of $S_X$.  Let
$$W= \{ ER^*x^* : x^* \in B_X~ \mbox{ and $E$ is an interval of $\nn$}\}.$$
Define the following norm on $c_{00}$:  For $(a_i) \in c_{00}$ let
\begin{equation}
\begin{split}
\|\sum_i a_i e_i \|_{Z_X} & = \sup\{ f(\sum_i a_i e_i) : f \in W\}\\
& =  \sup \{ \sum_{i \in E} a_i x_i : E \mbox{ finite interval in } \nn\}
\label{basisquotient}
\end{split}
\end{equation}
In the above $Re_i=x_i$ for all $i \in \nn$.
Let $Z_X$ be the completion of $c_{00}$ with the above norm.
\end{defn}

Note that $Z_X$ depends on the choice of the dense sequence $(x_n)$. Note
that $(e_n)$ is a bimonotone Schauder basis of $Z_X$.  We now define
the natural mapping from $Z_X$ to $X$.  It is easy to see that this map is a bounded quotient map.

\begin{defn}
Let $X$ be a separable Banach space such that $(x_i)$ is dense in $S_X$ and $Z_X$ be defined
as above.  Define $Q_X: Z_X \to X$ by $Q_X(e_i) = x_i$ and extending linearly.
\end{defn}

In the next proposition we collect some important facts concerning the spaces
and operators defined above.  The proofs can be found in \cite[Lemmas 14.3 and 14.8]{ATMemoirs}.

\begin{prop}
Let $X$ be a separable space and $Z$ be a space with a bimonotone Schauder basis. Then
\begin{itemize}
\item[(1)]  For every $x\in S_X$ there is a $y \in S_{Z_X}$ such that $Q_Xy=x$.  In particular,
$Q_X: Z_X \to X$ is a quotient.
\item[(2)] If $X$ does not contain $\ell_1$ then $Z_X$ does not contain $\ell_1$.
\item[(3)] For every $z\in S_Z$ there is a $y \in S_{Y_{G_Z}}$ such that $Q_{G_Z}y=x$.
In particular, $Q_{G_Z} : Y_{G_Z} \to Z$ is a quotient.
\item[(4)] If $Z$ does not contain $\ell_1$ then $Y_{G_Z}$ does not contain $\ell_1$.
\end{itemize}
\label{manyitems}
\end{prop}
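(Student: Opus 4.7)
The four items of Proposition~\ref{manyitems} are established in \cite[Lemmas 14.3 and 14.8]{ATMemoirs}; below I sketch the main ideas. The pairs (1), (3) (norm-attaining preimages under the quotient maps) and (2), (4) (stability of the absence of $\ell_1$) admit parallel arguments, so I treat each pair together.

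For (3), given $z = \sum_i b_i z_i \in S_Z$, inductively pick $n_i \in \Lambda_i$ so that $n_1 < n_2 < \cdots$; this is possible since each $\Lambda_i$ is infinite. Set $y = \sum_i b_i y_{n_i}$ in $Y_{G_Z}$; by (\ref{imageQ}), $Q_{G_Z} y = z$. For any functional $g = \sum_j a_j \sum_{n \in E \cap \Lambda_j} e_n^* \in G_Z$ (with $\|\sum_j a_j z_j^*\| \leq 1$), a direct computation gives $g(y) = \sum_{i : n_i \in E} a_i b_i$. Since $(n_i)$ is strictly increasing, $F := \{i : n_i \in E\}$ is itself an interval of $\nn$, so by duality and the bimonotonicity of $(z_i)$,
\[
g(y) = \bigl(\sum_j a_j z_j^*\bigr)\bigl(\sum_{i \in F} b_i z_i\bigr) \leq \|\sum_{i \in F} b_i z_i\|_Z \leq \|z\|_Z = 1.
\]
Hence $\|y\|_{Y_{G_Z}} \leq 1$, and the reverse inequality $\|y\|_{Y_{G_Z}} \geq \|Q_{G_Z} y\|_Z = 1$ follows from $\|Q_{G_Z}\| \leq 1$ (which itself is seen by taking $E$ to cover the support of a finitely supported $y$ and invoking that finite combinations of biorthogonals norm $Z$). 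Statement (1) is proved on the same template: using density of $(x_n)$ in $S_X$, one constructs $y = \sum_k c_k e_{n_k}$ via a series representation $x = \sum_k c_k x_{n_k}$ with indices $n_1 < n_2 < \cdots$ chosen so that every interval partial sum $\sum_{k=j}^m c_k x_{n_k}$ stays in $B_X$; this bounds $\|y\|_{Z_X} \leq 1$ directly from the supremum formula defining the norm, and equality is forced by $\|Q_X\| \leq 1$.

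For (2) and (4) the strategy is contrapositive: assume $Z_X$ (respectively $Y_{G_Z}$) contains an $\ell_1$-isomorphic sequence and produce one in $X$ (respectively $Z$). After a sliding-hump perturbation we may take this $\ell_1$-sequence $(v_k)$ to be a normalized block-basic sequence of the canonical basis. The essential structural feature of both norms is that they are suprema of functionals parametrised by intervals of $\nn$: for $Z_X$ an interval cuts out a partial sum in $X$, while for $Y_{G_Z}$ it produces, via (\ref{imageQ}), an element of $Z$. For each choice of scalars $(d_k)$ the $\ell_1$ lower bound $\|\sum d_k v_k\| \geq \delta \sum|d_k|$ is witnessed by some interval $E(d)$ depending on $(d_k)$. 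A Ramsey-type stabilization on a further subsequence forces these witnessing intervals to have a uniform interaction with the block supports $\supp(v_k)$ (for example, always either containing $\supp(v_k)$ entirely or avoiding it). On such a subsequence the images $(Q_X v_k)$, respectively $(Q_{G_Z} v_k)$, inherit the $\ell_1$ estimate, contradicting the hypothesis on the target space.

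The technical heart lies in (2) and (4): the interval flexibility of the norms on $Z_X$ and $Y_{G_Z}$ could in principle allow $\ell_1$-copies not visible in $X$ or $Z$, and ruling these out via the stabilization of norm-witnessing intervals is the key ingredient. Items (1) and (3) by contrast amount to constructing a single well-behaved series whose interval partial sums are uniformly controlled by the target norm.
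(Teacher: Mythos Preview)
The paper itself proves only item (3), deferring (1), (2), (4) to \cite{ATMemoirs}. Your argument for (3) is essentially the paper's: both construct the preimage by selecting one coordinate $n_i\in\Lambda_i$ per basis vector, bound $\|y\|$ above via bimonotonicity of $(z_i)$, and bound it below by exhibiting a norming functional from $G_Z$ (you phrase the latter as $\|Q_{G_Z}\|\le 1$; the paper writes out the functional explicitly). Your added requirement $n_1<n_2<\cdots$ makes the step ``$F$ is an interval'' transparent, a point on which the paper's own write-up is slightly loose.

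Your sketches for (1), (2), (4) go beyond what the paper presents. The outline for (1) is plausible. The outline for (2) and (4), however, has a real gap: the witnessing interval $E(d)$ depends on the \emph{entire} scalar sequence $(d_k)$, not on individual blocks, so there is no Ramsey argument on the block indices $k$ that stabilizes it --- each new choice of scalars may produce a different interval, and to conclude that the images $(Q v_k)$ are equivalent to $\ell_1$ one needs a lower bound valid for \emph{all} scalar sequences simultaneously. The route taken in \cite{ATMemoirs} is different: one combines the interval upper bound (what here is Remark~\ref{firstintervalestimate}) with Rosenthal's $\ell_1$ theorem applied in the target space, passing to weakly Cauchy images and deriving a contradiction with the $\ell_1$ lower estimate in the source. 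Since the paper also defers these items to the reference, this is not a deficiency relative to the paper's own proof, but the stabilization heuristic as stated would not close the argument.
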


\begin{proof}
We prove only (3). Let $z= \sum_{i=1}^d a_i z_i \in Z$ with $\|z\|=1$.
Let $\ell_i \in \Lambda_i$ for all $i = 1, \ldots, d$ and $x=\sum_{i=1}^d a_i e_{\ell_i}$. Clearly
$Q_{G_Z}x=z$.   We will show that $\|x\|=1$.

  Let $(b_i)_{i=1}^d$ such that $\| \sum_{i=1}^d b_i z_i^* \| \leq 1$
and $\sum_{i=1}^d a_i b_i = (\sum_{i=1}^d b_i z_i^*) (\sum_{i=1}^d a_i z_i) =1$.  By definition
$ \sum_{i=1}^d b_i e^*_{\ell_i} \in G_Z$.  Therefore
$$1= (\sum_{i=1}^d b_i e^*_{\ell_i} )(\sum_{i=1}^d a_i e_{\ell_i}) \leq \|x\|.$$

Let $\ee>0$. Find scalars $(c_i)_{i =1}^d$ and an interval $E$ such that
$$\sum_{i \in E}c_i a_i = (\sum_{i \in E} c_i e^*_{\ell_i})( \sum_{i=1}^d a_i e_{\ell_i}) \geq \frac{\|x\|}{(1+\ee)}.$$
Using bimonotonicity $\| \sum_{i \in E} c_i z_i^* \| \leq 1$.  Therefore
$$ \frac{\|x\|}{(1+\ee)} \leq \sum_{i \in E}c_i a_i \leq (\sum_{i \in E} c_i z^*_{i})( \sum_{i=1}^d a_i z_i) \leq 1.$$
Since $\ee$ was arbitrary $\|x\| \leq 1$.
\end{proof}

Our next result of this section is the following analogue of Proposition \ref{manyitems} (4).

\begin{prop}
If $Z$ has a basis and does not admit an $\ell_1$ spreading model then $Y_{G_Z}$ does not admit an $\ell_1$ spreading model. \label{nosminground}
\end{prop}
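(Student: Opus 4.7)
The plan is to argue by contradiction: assume $(v_n)\subset Y_{G_Z}$ generates an $\ell_1$ spreading model with constant $\delta>0$. Since $Y_{G_Z}$ does not contain $\ell_1$ by Proposition~\ref{manyitems}(4), Rosenthal's theorem applied to $(v_n)$ followed by passing to successive differences $v_{2n}-v_{2n-1}$ and a Bessaga--Pelczynski perturbation would reduce us to the case where $(v_n)$ is a normalized, weakly null, block sequence in $Y_{G_Z}$ still generating an $\ell_1$ spreading model with some constant still denoted $\delta$. Put $w_n := Q_{G_Z}v_n\in Z$; since $Q_{G_Z}$ is a quotient map, $(w_n)$ is weakly null in $Z$ with $\|w_n\|_Z \leq 1$. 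The goal is to produce, along a subsequence, an $\ell_1$ spreading model in $Z$ generated by $(w_n)$, contradicting the hypothesis on $Z$.

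The key structural observation is that every $g=\sum_j b_j \sum_{n\in \Lambda_j\cap E} e_n^*\in G_Z$ is indexed by a single interval $E\subset\nn$. Hence when $g$ is applied to a block sum $\sum_{i=1}^k a_i v_{\ell_i}$, the interval $E$ truncates the support of at most two of the $v_{\ell_i}$ (a leftmost and a rightmost boundary); every other index $i$ is either \emph{interior} (with $\supp v_{\ell_i}\subset E$, so $g(v_{\ell_i})=\phi(w_{\ell_i})$ for $\phi:=\sum_j b_j z_j^*\in B_{Z^*}$) or \emph{exterior} (with $E\cap\supp v_{\ell_i}=\emptyset$ and $g(v_{\ell_i})=0$). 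Consequently, if $g$ witnesses $\|\sum_{i=1}^k a_i v_{\ell_i}\|_{Y_{G_Z}}\geq \delta\sum|a_i|$ and $I$, $B$ denote the interior and boundary index sets (so $|B|\leq 2$), then
\[\phi\Bigl(\sum_{i\in I} a_i w_{\ell_i}\Bigr)=g\Bigl(\sum_{i=1}^k a_i v_{\ell_i}\Bigr)-\sum_{i\in B} a_i\,g(v_{\ell_i})\geq \delta\sum|a_i|-2\max_i|a_i|,\]
so $\|\sum_{i\in I} a_i w_{\ell_i}\|_Z \geq \delta\sum|a_i|-2\max_i|a_i|$.

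Setting $a_i\equiv 1$, this first rules out $\|w_n\|_Z\to 0$: if every $\|w_{\ell_i}\|_Z<\delta/2$, then $g(\sum v_{\ell_i})\leq 2+k(\delta/2)$, which for $k$ large contradicts the lower bound $\geq\delta k$. Hence $\|w_n\|_Z\geq\eta>0$ along a subsequence, and (by Brunel--Sucheston, after a further subsequence) $(w_n)$ generates a spreading model $(x_n)$, which is $1$-unconditional and spreading because $(w_n)$ is weakly null, with $\|x_1\|\geq\eta$.

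The final step is to show $(x_n)$ is equivalent to the $\ell_1$ basis. For fixed $(a_i)$ and $k$, the possible interior sets $I\subset\{1,\dots,k\}$ form a finite family, so by pigeonhole a fixed $I$ arises for cofinally many spreading $(\ell_1,\dots,\ell_k)$. Taking the spreading model limit along such tuples yields $\|\sum_{i\in I} a_i x_i\|\geq\delta\sum|a_i|-2\max_i|a_i|$, which by $1$-unconditionality transfers to $\|\sum_{i=1}^k a_i x_i\|\geq\delta\sum|a_i|-2\max_i|a_i|$. Combined with $\|\sum_{i=1}^k a_i x_i\|\geq\eta\max_i|a_i|$ (from $1$-unconditionality and $\|x_j\|\geq\eta$), splitting on whether $\max_i|a_i|\leq\delta/(2+\eta)\cdot\sum|a_i|$ gives
\[\Bigl\|\sum_{i=1}^k a_i x_i\Bigr\|\geq \frac{\eta\delta}{2+\eta}\sum_i|a_i|\]
in either case. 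Hence $(x_n)$ is $\ell_1$-equivalent, so $(w_n)$ has an $\ell_1$ spreading model in $Z$, the desired contradiction. The main obstacle is the two-boundary bookkeeping that produces the $-2\max_i|a_i|$ error term; absorbing it requires the separately obtained lower bound $\|x_j\|\geq\eta$.
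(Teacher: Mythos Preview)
Your argument is correct, but it takes a genuinely different route from the paper's. Both proofs rest on the same structural observation about $G_Z$: every norming functional is carried by a single interval $E$, so when applied to a block sum only two boundary terms are ``truncated'' while all interior terms see the full functional $\phi=\sum_j b_j z_j^*\in B_{Z^*}$ through $Q_{G_Z}$. The paper packages this as Remark~\ref{firstintervalestimate} and then invokes Rosenthal's Ces\`aro--summability characterization (Theorem~\ref{Cesaro}): starting from an arbitrary seminormalized weakly null sequence in $Y_{G_Z}$, it passes to a subsequence whose image under $Q_{G_Z}$ is Ces\`aro summable in $Z$ (this uses only the hypothesis on $Z$), and the interval estimate plus the two-boundary bound immediately upgrades this to Ces\`aro summability in $Y_{G_Z}$. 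No spreading-model calculus, no pigeonhole, no case split on $\max|a_i|$.

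Your proof, by contrast, runs the contrapositive through the spreading-model formalism: you push a hypothetical $\ell_1$ spreading model in $Y_{G_Z}$ down to $Z$ via $Q_{G_Z}$, using Brunel--Sucheston, $1$-unconditionality of weakly-null spreading models, a pigeonhole over the finitely many interior sets $I\subset\{1,\dots,k\}$, and a final splitting argument to absorb the $-2\max_i|a_i|$ error. This is more hands-on and avoids quoting Theorem~\ref{Cesaro}, at the cost of noticeably more bookkeeping. The paper's approach is cleaner and shorter; yours is more self-contained regarding the spreading-model side and makes the transfer mechanism to $Z$ very explicit. Either way, the heart of the matter is the same two-endpoint bound coming from the single interval in each $g\in G_Z$.
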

Before proving the above, we make a remark that allows us to estimate the norms of vectors in $Y_{G_Z}$ in terms of there images under the quotient map $Q_{G_Z}$.  We also recall an important theorem on the existence of $\ell_1$ spreading models in a Banach space
not containing $\ell_1$.
\begin{rem}
Let $\sum_j a_j e_j \in Y_{G_Z},$ then
\begin{equation}
\|\sum_j a_j e_j \|_{G_Z} \leq \sup \{ \| Q_{G_Z} P_E ( \sum_{ j } a_j e_j )\|_Z: E \mbox{ is an interval in }\nn\}.
\end{equation}
In the above, $P_E(\sum_j a_j e_j)= \sum_{j\in E} a_j e_j$.
\label{firstintervalestimate}
\end{rem}
\begin{proof}
Let $E \subset \nn$ be an interval and $(b_i)_{i=1}^d$ be scalars such that $\| \sum_{i=1}^d b_i z^*_i \| \leq 1$.  Using (\ref{imageQ})
\begin{equation}
\begin{split}
|(\sum_{i=1}^d b_i \sum_{j \in E\cap \Lambda_i} e^*_j )(\sum_{i =1}^\infty \sum_{j \in \Lambda_j} a_j e_j)|& = | \sum_{i=1}^d b_i ( \sum_{j \in \Lambda_i \cap E} a_j )| \\
& = | ( \sum_{i=1}^d b_i e^*_i ) ( \sum_{i=1}^d (\sum_{i \in \Lambda_j \cap E} a_j ) z_i) | \\
&  \leq \|  Q_{G_Z}\sum_{i=1}^d ( \sum_{j \in \Lambda_i \cap E} a_j z_j) \|_Z \\
& =  \|  Q_{G_Z} P_E ( \sum_{j} a_j z_j) \|_Z
\end{split}
\end{equation}
Since $E$ and $(b_i)_{i=1}^d$ are arbitrary, the remark follows.
\end{proof}
The next theorem we need  due to H.P. Rosenthal \cite{RosenthalSM}.  A similar statement can be found in \cite{BeauzamyLSM}.
\begin{thm}
Let $X$ be a Banach space not containing $\ell_1$.  Then the following are equivalent.
\begin{enumerate}
\item $X$ does not have an $\ell_1$ spreading model.
\item Every seminormalized weakly null sequence $(x_n)$ has a Cesaro summable subsequence.  In other words,
there is a subsequence $(y_n)$ of $(x_n)$ such that $\|1/n\sum_{i=1}^n y_i\| \to 0$ as $n \to \infty$.
\end{enumerate}
\label{Cesaro}
\end{thm}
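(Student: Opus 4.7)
The plan is to establish the two implications separately, with the bulk of the work going into $(1) \Rightarrow (2)$.

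For $(2) \Rightarrow (1)$ I would argue contrapositively. If $X$ admits an $\ell_1$ spreading model, it is generated by some seminormalized sequence $(x_n)$ with constant $\delta > 0$. Since $X$ does not contain $\ell_1$, Rosenthal's $\ell_1$ theorem yields a weakly Cauchy subsequence, and the consecutive differences $y_n = x_{2n} - x_{2n-1}$ form a seminormalized weakly null sequence. A direct computation shows that the spreading $\ell_1$-lower estimates transfer to such differences (in fact with constant essentially $2\delta$), so any subsequence $(z_n)$ of $(y_n)$ satisfies $\|n^{-1}\sum_{i=1}^n z_i\| \geq \delta$ for all $n$ sufficiently large. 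Hence no subsequence of $(y_n)$ is Cesaro summable and $(2)$ fails.

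For $(1) \Rightarrow (2)$, let $(x_n)$ be a seminormalized weakly null sequence. By the Brunel--Sucheston theorem, after passing to a subsequence $(y_n)$ one obtains a spreading model $(\tilde e_n)$ in some seminormed space; since $(y_n)$ is weakly null the sequence $(\tilde e_n)$ is spreading. The Cesaro norms $s_n := \|\sum_{i=1}^n \tilde e_i\|_{SM}$ are subadditive in $n$, so $\alpha := \lim_n s_n/n$ exists by Fekete's lemma. I split into cases. If $\alpha = 0$, then by the definition of a spreading model I can extract a further subsequence of $(y_n)$ whose Cesaro averages converge to $0$ in $X$, establishing $(2)$. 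If $\alpha > 0$, the classical Brunel--Sucheston type dichotomy asserts that a spreading sequence with Cesaro averages bounded below by $\alpha$ is equivalent to the unit vector basis of $\ell_1$, so $(y_n)$ would witness an $\ell_1$ spreading model in $X$, contradicting $(1)$.

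The main obstacle is the implication $\alpha > 0 \Rightarrow (\tilde e_n) \sim \ell_1$. I would handle it by the standard level-set decomposition: for $f = \sum_{i \in F} a_i \tilde e_i$ with $\|(a_i)\|_\infty = 1$, partition the support into dyadic coefficient bands $B_k = \{i \in F : 2^{-k-1} < |a_i| \leq 2^{-k}\}$. Applying the spreading property within each band, $\|\sum_{i \in B_k} a_i \tilde e_i\|_{SM}$ is bounded below by a constant times $2^{-k}|B_k|\cdot \alpha$, and summing over $k$ after a sign-partition / convexity argument produces a lower bound proportional to $\alpha \sum_{i \in F}|a_i|$. The technical subtlety is controlling the constants uniformly across coefficient scales, which is exactly where the spreading invariance and the monotonicity of $s_n/n$ are used. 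Once this step is in place the dichotomy closes.
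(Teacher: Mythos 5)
First, note that the paper does not prove this theorem at all: it is quoted from Rosenthal (see also Beauzamy--Lapresté), so there is no in-paper argument to compare yours against. Your overall architecture (differences of a weakly Cauchy subsequence for $(2)\Rightarrow(1)$; Brunel--Sucheston plus the dichotomy $\alpha=0$ versus $\alpha>0$ for $(1)\Rightarrow(2)$) is the standard and correct route. But two of your steps have genuine gaps. The smaller one is in the $\alpha=0$ case: the spreading model only controls $\|\sum_{i=1}^{N} y_{\ell_i}\|$ when the indices start far out ($N\leq \ell_1$), whereas Cesàro summability of a \emph{fixed} subsequence concerns sums whose first index is fixed while $n\to\infty$. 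You cannot simply ``extract a further subsequence whose Cesàro averages converge to $0$'' by definition; you need the blocking argument (group a tail of the subsequence into consecutive blocks of length $N$ with $s_N/N<\ee$, so every block average is at most $2\ee$, hence $\limsup_n\|n^{-1}\sum_{i\le n}\|\leq 2\ee$) followed by a diagonalization over $\ee\downarrow 0$. A symmetric remark applies to your $(2)\Rightarrow(1)$ claim that \emph{every} Cesàro mean of \emph{every} subsequence is eventually $\geq\delta$: for a fixed subsequence $(y_{k_i})$ the lower $\ell_1$ estimate does not apply to $\sum_{i=1}^n y_{k_i}$ once $n>k_1$; what you get is $\|S_{2n}-S_n\|\geq cn$, hence $\limsup\|S_n\|/n\geq c/4$, which still denies Cesàro summability but is what must actually be said.

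The serious gap is the step $\alpha>0\Rightarrow(\tilde e_n)\sim\ell_1$. As stated (``a spreading sequence with Cesàro averages bounded below is equivalent to $\ell_1$'') the claim is false without unconditionality -- the summing basis of $c_0$ is $1$-spreading with $\|\sum_{i=1}^n\tilde e_i\|=n$ -- so you must first invoke that the spreading model of a weakly null sequence is suppression-$1$-unconditional. Worse, your dyadic band argument does not close even with unconditionality: it gives $\|\sum_{i\in B_k}a_i\tilde e_i\|\gtrsim\alpha\sum_{i\in B_k}|a_i|$ on each band, but to sum these lower bounds over the bands you need $\|\sum_k v_k\|\gtrsim\sum_k\|v_k\|$ for the band vectors, which is exactly the $\ell_1$ lower estimate you are trying to prove; unconditionality only yields the maximum over bands, and for coefficients like $a_i=1/i$ this loses a factor of the number of bands ($\sim\log n$). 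The correct argument is different: using $1$-spreading invariance and convexity one proves the replication inequality $\|\sum_{i=1}^n k_i\tilde e_i\|\geq\|\sum_{j=1}^{K}\tilde e_j\|$ for positive integers $k_i$ with $K=\sum k_i$ (e.g.\ $K\sum_{j=1}^K\tilde e_j$ is a sum of spread copies of the pattern $(k_1,\dots,k_n)$), then uses that $\alpha=\inf_n s_n/n$ by subadditivity to get $\|\sum k_i\tilde e_i\|\geq\alpha\sum k_i$, extends to positive reals by homogeneity and continuity, and handles signs by suppression unconditionality at the cost of a factor $2$. Without this replication step your proof of the key dichotomy does not go through.
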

\begin{proof}[Proof of Proposition \ref{nosminground}]
Using Proposition \ref{manyitems} (4), $Y_{G_Z}$ does not contain $\ell_1$.
Let $(z_n)$ be a seminormalized weakly null sequence in $Y_{G_Z}$.
Our goal is to extract a Cesaro summable subsequence.  We pass to a subsequence of $(z'_n)$ of $(z_n)$ that
has the following properties:
\begin{enumerate}
\item $(z'_n)$ is equivalent to a block sequence of $(y_n)$;
\item $(Q_{G_Z}z'_n)$ is either a bimonotone basic sequence or $\|Q_{G_Z}z'_n\|<2^{-n}$;
\item $(Q_{G_Z}z'_n)$ is Cesaro summable.
\end{enumerate}
Notice that $(2)$ has two cases.  Let $\ee>0$.  Find $n_0$ such that $\|1/n_0 \sum_{i=1}^{n_0} Q_{G_Z} z^\prime_n \| +3/n_0 < \ee$.
Let $E$ be an arbitrary interval.  Find $n_1, n_2$ in $\nn$ such that
\begin{equation*}
\begin{split}
& n_1 = \min\{ n \in \{ 1, \ldots, n_0\} : \supp z^\prime_n \cap E \not= \emptyset \},\\
& n_2 = \max\{ n  \in \{ 1, \ldots, n_0\}: \supp z^\prime_n \cap E \not= \emptyset \}
\end{split}
\end{equation*}
Assume first that $(Q_{G_Z}z_n^\prime)$ is bimonotone basic.  Since $(z^\prime_n)$ is a block, for $n_1< n < n_2$ we have $Q_{G_Z}P_E(z^\prime_n)=Q_{G_Z}(z^\prime_n)$.  Using this fact, our assumption on $n_0$ and fact that $(Q_{G_Z}z_n^\prime)$ is basic we have
\begin{equation}
\begin{split}
\|Q_{G_Z} P_E (\frac{1}{n_0} \sum_{n=1}^{n_0} z^\prime_n ) \| &= \frac{1}{n_0}\bigg[ \| Q_{G_Z} P_E(z^\prime_{n_1})\| + \| \sum_{n=n_1+1}^{n_2-1} Q_{G_Z} z^\prime_n\| + \|Q_{G_Z} P_E z^\prime_{n_2}\| \bigg] \\
& \leq \frac{2}{n_0} + \|\frac{1}{n_0} \sum_{i=1}^{n_0} Q z^\prime_n \| < \ee. \label{Qestimates}
\end{split}
\end{equation}
Since $E$ was arbitrary, applying Remark \ref{firstintervalestimate}, we finish the proof in the case when $(Q_{G_Z}z_n^\prime)$ bimonotone basic.
In the case that $\|Q_{G_Z}z^\prime_n\|<2^{-n}$ we have
 \begin{equation}
 \frac{1}{n_0} \|\sum_{n=n_1+1}^{n_2-1} Q_{G_Z} z^\prime_n\| < \frac{1}{n_0}.
\label{tozero}
 \end{equation}
Proceeding the same way as in the first inequality of (\ref{Qestimates}), using (\ref{tozero}) and the fact that $3/n_0<\ee$, we finish the proof.
\end{proof}

The final proposition of this section is analogous to Proposition \ref{manyitems} (2).

\begin{prop}
If $X$ does admit an $\ell_1$ spreading model then $Z_X$ does not admit an $\ell_1$ spreading model. \label{separabletobasis}
\end{prop}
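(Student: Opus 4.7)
The statement as printed is false as written: $\ell_1$ spreading models lift along bounded quotients. Indeed, if $(x_n)$ generates an $\ell_1$ spreading model in $X$, pick a bounded lift $(y_n)\subset Z_X$ with $Q_X y_n=x_n$; the lower $\ell_1$-estimate descends through $\|Q_X\|$ while the upper $\ell_1$-estimate comes from $\sup_n \|y_n\|_{Z_X}<\infty$, forcing $Z_X$ to admit an $\ell_1$ spreading model. Consistent with the paper's outline (step 1) and with the analogy to Proposition \ref{manyitems}(2), I will plan a proof of the natural counterpart: \emph{if $X$ does not admit an $\ell_1$ spreading model, then $Z_X$ does not admit an $\ell_1$ spreading model}.

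The argument mirrors the proof of Proposition \ref{nosminground} essentially verbatim, with $(Y_{G_Z},Q_{G_Z},Z)$ replaced by $(Z_X,Q_X,X)$. From the definition of the $Z_X$-norm one reads off the analogue of Remark \ref{firstintervalestimate}:
$$\|\xi\|_{Z_X}=\sup\bigl\{\|Q_X(P_E\xi)\|_X:E\text{ a finite interval in }\nn\bigr\},\qquad \xi\in Z_X,$$
where $P_E$ is the interval projection on the basis $(e_n)$ (this is in fact the defining formula). Since $X$ not admitting an $\ell_1$ spreading model forces $\ell_1\not\hookrightarrow X$, Proposition \ref{manyitems}(2) gives $\ell_1\not\hookrightarrow Z_X$, and Theorem \ref{Cesaro} reduces the task to producing a Cesaro summable subsequence of any seminormalized weakly null $(z_n)\subset Z_X$.

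Pass to a subsequence $(z'_n)$ satisfying: (i) $(z'_n)$ is equivalent to a block sequence of $(e_n)$; (ii) either $(Q_X z'_n)$ is seminormalized and bimonotone basic in $X$ (via Bessaga-Pe{\l}czy\'nski), or $\|Q_X z'_n\|_X<2^{-n}$; (iii) in the first case of (ii), $(Q_X z'_n)$ is Cesaro summable in $X$ (Theorem \ref{Cesaro} applied in $X$, using the hypothesis). Given $\ee>0$, choose $n_0$ with $\|n_0^{-1}\sum_{n=1}^{n_0}Q_X z'_n\|_X+3/n_0<\ee$; for an arbitrary interval $E$, let $n_1\le n_2$ be the extreme indices $n\le n_0$ with $\supp z'_n\cap E\neq\emptyset$. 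The decomposition
$$Q_X P_E\bigl(n_0^{-1}{\textstyle\sum_{n=1}^{n_0}}z'_n\bigr)=n_0^{-1}\bigl(Q_X P_E z'_{n_1}+{\textstyle\sum_{n=n_1+1}^{n_2-1}}Q_X z'_n+Q_X P_E z'_{n_2}\bigr)$$
isolates two boundary contributions of norm $\le 1/n_0$ each; the middle block, divided by $n_0$, is controlled by the Cesaro average (bimonotone case) or by $1/n_0$ (decay case, via $\sum 2^{-n}<1$). Combined with the interval-norm formula for $\|\cdot\|_{Z_X}$, this yields $\|n_0^{-1}\sum_{n=1}^{n_0}z'_n\|_{Z_X}<\ee$. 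The main technical point is arranging (i)-(iii) simultaneously; the final estimate is a direct transcription of (\ref{Qestimates}).
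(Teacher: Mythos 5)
You correctly spot that the printed hypothesis is a typo (the intended statement, clear from the analogy with Proposition \ref{manyitems}(2) and from the first line of the paper's own proof, is that $X$ does \emph{not} admit an $\ell_1$ spreading model), and your argument for the corrected statement is exactly the paper's: the paper simply invokes Remark \ref{intervalestimate} as the analogue of Remark \ref{firstintervalestimate} and says to mimic the proof of Proposition \ref{nosminground}, which is precisely the transcription you carry out. Your proposal is correct and takes essentially the same approach as the paper.
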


\begin{proof}
By Proposition \ref{manyitems} (b) we have that $Z_X$ does not contain $\ell_1$.  Therefore, applying Theorem \ref{Cesaro} we can consider an arbitrary seminormalized weakly null sequence and show it has a
 Cesaro summable subsequence.  The following remark is a restatement of (\ref{basisquotient}).
\begin{rem}
Let $\sum_i a_i e_i \in Z_X$ and $P_E (\sum_i a_i e_i)  = \sum_{i \in E} a_i e_i$.  Then
$$\| \sum_i a_i e_i \|_Z = \sup \{ \| Q_X P_E ( \sum_i a_i e_i )\|_X: E \mbox{ is an interval in }\nn\}.$$
\label{intervalestimate}
\end{rem}
For an arbitrary seminormalized weakly null sequence in $Z_X$ we can pass to a subsequence satisfying
the same (1), (2) and (3) as in the proof of Proposition \ref{nosminground}.  Since Remark \ref{intervalestimate} is the same
are Remark \ref{firstintervalestimate} with a different quotient map, by mimicking the proof of Proposition \ref{nosminground} it can be shown that this subsequence in Cesaro summable, as required.
\end{proof}

\section{The construction of $T_{G_Z,2}$ and some properties}

For the rest of the paper we fix a space $Z$ having a bimonotone Schauder basis and not admitting
an $\ell_1$ spreading model.  In this section we define the space
$T_{G_Z,2}$ that does not admit any $\ell_p$ or $c_0$ spreading model
and has $Z$ as a quotient.  To star,t we fix two
increasing sequences of natural numbers $(m_j)_{j=1}^\infty$ and $(n_j)_{j=1}^\infty$
satisfying:
\begin{itemize}
\item[(a)] $\sum_{i=1}^\infty \frac{1}{m_i} < \frac{1}{10}$.
\item[(b)] $\lim_{i \to \infty} \frac{((i-1)n_{i-1})^{s_i}}{n_i} = 0$.  Where $s_i =\log_{m_1}(m_i)$.
\item[(c)] $\lim_{i \to \infty} \frac{n_i^\alpha}{m_i} = \infty$ for all $\alpha >0$.
\end{itemize}
We now define the norming set
inductively.
Let $G_0=G_Z$ (recall the definition from (\ref{groundZ})). Suppose $G_n$ has been defined for some $n \geq 0$  define $G_{n+1}$ as follows:
\begin{equation*}
 G_{n+1}^\prime=\{ \frac{1}{m_j} \sum_{i=1}^d f_i : j \in \nn,~ d \leq n_j, ~(f_i)_{i=1}^d \subset G_n \mbox{ and } f_1 < \cdots < f_d \}
\end{equation*}
\begin{equation*}
G_{n+1}^{\prime\prime}=\{ \sum_{i=1}^n \lambda_i f_i : n \in \nn,~\lambda_i\geq 0,~\sum_{i=1}^n \lambda^2_i\leq 1, ~(f_i)_{i=1}^n \subset G_{n+1}^\prime, w(f_i)=m_{i}\}
\end{equation*}
Let $G_{n+1} =G_{n+1}^{\prime \prime} \cup G_n$.  Let $D_{G_Z}= \cup_{n=1}^\infty G_n$.

Let $T_{G_Z,2}$ be the completion of $c_{00}$ under the norm $\| x \|_{D_{G_Z}} = \{ f(x) : f \in D_G\}$.
\begin{notation}
Let $f \in G_n\setminus G_Z$  for some $n \in \nn$.
\begin{enumerate}
\item If $f \in G_{n}^\prime$ then $f= 1/m_j \sum_{i=1}^d f_i$ for some $j \in \nn$. In this case we say $f$ is weighted and set the `weight of $f$' $=w(f) = m_j$.  Note that this weight is not unique.
\item If $f \in G_{n}^{\prime\prime}$ then $f= \sum_{i=1}^k \lambda_i f_i$ where $w(f_i)=m_i$ and $\sum_{i=1}^k \lambda_i^2 \leq 1$.  Set
$w(f)=\{ m_i : \lambda_i \not=0\}$. If $|w(f)|>1$ we say $f$ is not weighted.
\item For $\sum_{i=1}^k \lambda_i f_i  \in G_{n}^{\prime\prime}$
let $f_{\leq i_0} = \sum_{i=1}^{i_0} \lambda_i f_i$ and $f_{>i_0}= \sum_{i=i_0+1}^{k} \lambda_i f_i$.
\end{enumerate}
\end{notation}

A variant of the next theorem can be found in \cite[Theorem 11.3]{AMP}.  We include the proof here to give a more complete presentation.

\begin{thm}
Let $Z$ be a space with a bimonotone Schauder basis not having an $\ell_1$ spreading model.  Then $T_{G_Z,2}$
does not have any $\ell_p$ or $c_0$ as a spreading model. \label{noellporc0}
\end{thm}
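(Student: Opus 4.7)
The plan is to follow the Odell--Schlumprecht strategy of \cite{OSKrivine}, adapted as in \cite[Theorem 11.3]{AMP} to the presence of the ground set $G_Z$. The natural basis $(y_n)$ of $T_{G_Z,2}$ is bimonotone and, by standard arguments (the damping factors $1/m_j$ keep $\ell_1$ out of $T_{G_Z,2}$), weakly null. Any spreading model of $T_{G_Z,2}$ is therefore generated by a seminormalized block sequence $(x_n)$ of $(y_n)$; I would pass to a subsequence forming a rapidly increasing sequence (RIS), where each $x_n$ is almost normed by a functional of weight $m_{j_n}$ and $(j_n)$ grows fast relative to $|\supp x_n|$, at a rate dictated by $(m_j),(n_j)$.

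The principal tool is a basic inequality estimating $|f(\tfrac{1}{n}\sum_{i=1}^{n} x_i)|$ for $f\in D_{G_Z}$, proved by induction on the smallest $k$ with $f\in G_k$. The base case $f\in G_Z$ is where the hypothesis on $Z$ enters: Proposition \ref{nosminground} and Theorem \ref{Cesaro} allow a further refinement of $(x_n)$ so that $\|\tfrac{1}{n}\sum_{i=1}^n x_i\|_{Y_{G_Z}}\to 0$, whence $|f(\tfrac{1}{n}\sum x_i)|\to 0$ uniformly over $f\in G_Z$. For $f=\tfrac{1}{m_j}\sum_{i=1}^d f_i\in G'_{k+1}$ the damping $1/m_j$, the cardinality bound $d\leq n_j$, and an RIS comparison between $n$ and $n_j$ give the inductive estimate. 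For $f=\sum_i\lambda_i f_i\in G''_{k+1}$ one applies Cauchy--Schwarz with $\sum\lambda_i^2\leq 1$ and the inductive bound on each $f_i$. Specialising to $n=n_k$ yields $\|\tfrac{1}{n_k}\sum_{i=1}^{n_k} x_i\|\leq C/m_k\to 0$, which immediately kills any $\ell_1$ spreading model.

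For $c_0$ and $\ell_p$ with $1<p<\infty$, I would exploit the complementary lower bounds produced by the $\ell_2$-averaging built into $G''$. Selecting functionals $h_i$ of weight $m_i$ that essentially norm $x_i$ and assembling $f_k=\sum_{i=1}^{n_k}\tfrac{1}{\sqrt{n_k}}\,h_i\in G''$, one obtains $|f_k(\sum_{i=1}^{n_k} x_i)|\geq c\sqrt{n_k}$. Combined with the spreading-model prediction $\|\sum_{i=1}^{n_k} x_i\|\asymp n_k^{1/p}$ (or a bounded constant for $c_0$), this directly rules out $c_0$ and $\ell_p$ for $p>2$ via conditions (b) and (c). The remaining range $p\in(1,2]$ is handled by iterating the $\ell_2$-averaging operation on rescaled RIS vectors such as $u_k=(m_k/n_k)\sum_{i=1}^{n_k} x_i$, which form a new RIS at a higher scale and whose norms are again controlled by the basic inequality; the growth conditions then close the argument at every level. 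The main obstacle is the careful inductive proof of the basic inequality, especially in the $G''$ step, together with the tracking of iterated scales against (a), (b), (c); the ground set $G_Z$ intervenes only in the base case of the induction, where Proposition \ref{nosminground} substitutes for a trivial ground set bound used in the standard Schlumprecht-type arguments.
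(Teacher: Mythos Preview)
Your treatment of $c_0$ and $\ell_p$ for $p>1$ is both more elaborate than needed and not correct as written. The functional $f_k=\sum_{i=1}^{n_k}\tfrac{1}{\sqrt{n_k}}h_i$ you propose must, by the definition of $G''$, have $w(h_i)=m_i$; but an arbitrary block vector $x_i$ need not be nearly normed by any weight-$m_i$ functional, and wrapping a norming $g_i$ as $\tfrac{1}{m_i}g_i$ gives only $h_i(x_i)\approx 1/m_i$, which kills the $\sqrt{n_k}$ lower bound. The paper handles all of $c_0$ and $\ell_p$, $p>1$, in two lines using $G'$ rather than $G''$: for any normalized block sequence one has $\bigl\|\sum_{k=1}^{n_i}y_k\bigr\|\geq n_i/m_i$ via the functional $\tfrac{1}{m_i}\sum_k g_k\in G'$, and condition~(c) forces $n_i/m_i$ to dominate $C n_i^{1/p}$ for every $p>1$. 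No iteration, no split at $p=2$.

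The more serious gap is in the $\ell_1$ case. You assert that one may pass to a subsequence in which each $x_n$ is almost normed by a weighted functional of weight $m_{j_n}$; this is precisely the nontrivial step, and it fails for a general block sequence---a vector in $T_{G_Z,2}$ may well be normed only by an unweighted $G''$ combination or by a ground functional. The paper obtains a version of this statement (Claim~\ref{claim13}) only after first arranging $\|x_n\|_{G_Z}<\varepsilon$ via Erd\H{o}s--Magidor and then exploiting the $(1-\varepsilon)$-$\ell_1$ behaviour together with Lemma~\ref{finitesupport}; the $\ell_2$ structure of $G''$ enters here, through the estimate $\sum_{i>i_0}(\lambda_i^n)^2<3\varepsilon$, not through a basic inequality. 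Furthermore, your inductive $G''$ step via Cauchy--Schwarz does not close: it produces a bound of the form $\bigl(\sum_i|f_i(u)|^2\bigr)^{1/2}$ with $|f_i(u)|$ of order $1/m_i$ for small $i$, and this is a fixed constant, not a quantity tending to zero with $k$. The paper circumvents this entirely: having Claim~\ref{claim13} in hand, it shows (Lemma~\ref{largeweight}) that any $\phi$ all of whose weights exceed $m_{i_0}$ satisfies $\phi(x_n)<\tfrac12$---by combining $\phi$ with the $\psi^n$ of Claim~\ref{claim13} inside $G''$---and then reaches a contradiction on the single average $\tfrac{1}{n_{i_0+1}}\sum_q x_{n_{i_0+1}+q}$ by splitting an almost-norming $\phi$ into $\phi_{\le i_0}+\phi_{>i_0}$ and applying Lemma~\ref{smallweight} to the first piece and Lemma~\ref{largeweight} to the second.
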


Before passing to the proof we state two lemmas.

\begin{lem}
Suppose $y \in c_{00}$ and $\ee > 0$.  There is an $i_0 \in \nn$ such that for all $f \in D_{G_Z}$,
$f_{>i_0}(y) < \ee$. \label{finitesupport}
\end{lem}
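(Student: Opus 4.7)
The plan is to bound each summand $\lambda_i f_i$ in the convex-combination decomposition of $f$ and then control the tail $f_{>i_0}(y)$ by Cauchy--Schwarz together with the summability $\sum_i 1/m_i^2 < \infty$, a consequence of (a). Only functionals $f \in G_n''$ for some $n$ need to be treated, since $f_{>i_0}$ is defined only in that case; for $f \in D_{G_Z}$ not of convex-combination type one reads $f_{>i_0} = 0$ and the statement is trivial.

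The key preliminary estimate, proved by induction on $n$, is that there is a constant $C \geq 1$ such that every $g \in G_n$ satisfies $\|g\|_\infty \leq C$, and every weighted $g$ of weight $m$ satisfies $\|g\|_\infty \leq C/m$. For the base case $g = \sum_i a_i \sum_{k \in E \cap \Lambda_i} e_k^* \in G_Z$, bimonotonicity of $(z_i)$ yields $|a_i|\,\|z_i^*\|_{Z^*} \leq \|\sum_j a_j z_j^*\|_{Z^*} \leq 1$, so each $|a_i|$ is uniformly bounded (using a lower bound on $\|z_i^*\|_{Z^*}$ from the normalization of $(z_i)$); hence so are the coordinates of $g$. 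For $g = (1/m_j)\sum_k f_k \in G_n'$, the successive supports of the $f_k$ force each coordinate to lie in at most one $\supp(f_k)$, giving $\|g\|_\infty \leq C/m_j$. For $g = \sum_k \lambda_k h_k \in G_n''$ with $w(h_k) = m_k$ and $\sum_k \lambda_k^2 \leq 1$, Cauchy--Schwarz combined with the inductive bound $\|h_k\|_\infty \leq C/m_k$ gives
$$|g(e_j)| \leq \Bigl(\sum_k \lambda_k^2\Bigr)^{1/2}\Bigl(\sum_k h_k(e_j)^2\Bigr)^{1/2} \leq C\Bigl(\sum_k \frac{1}{m_k^2}\Bigr)^{1/2} \leq C.$$

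Applying this to $f = \sum_{i=1}^d \lambda_i f_i \in G_n''$ with $w(f_i) = m_i$, I get $|f_i(y)| \leq \|y\|_1\, \|f_i\|_\infty \leq C\|y\|_1/m_i$ for every $y \in c_{00}$. A further application of Cauchy--Schwarz then yields
$$|f_{>i_0}(y)| \leq \Bigl(\sum_{i>i_0}\lambda_i^2\Bigr)^{1/2}\Bigl(\sum_{i>i_0} f_i(y)^2\Bigr)^{1/2} \leq C\|y\|_1\Bigl(\sum_{i>i_0}\frac{1}{m_i^2}\Bigr)^{1/2}.$$
Since $\sum_i 1/m_i^2 < \infty$, I can pick $i_0 = i_0(y,\ee)$ large enough that the right-hand side drops below $\ee$, and the bound obtained is uniform over all admissible $f$.

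I expect the only delicate point is the base case of the induction, namely uniform boundedness of the coefficients $a_i$ for $G_Z$-functionals; this rests on the bimonotonicity of $(z_i)$ plus the normalization of the basis. The rest reduces to the Cauchy--Schwarz estimate above, exploiting the fast growth of $(m_i)$ from (a).
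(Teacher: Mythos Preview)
Your argument is correct, but it is considerably more elaborate than what the paper does. The paper's entire proof is one line: choose $i_0$ so that $\sum_{i>i_0}|\supp y|/m_i<\ee$ and note that the estimate then follows. The intended bound is the crude one
\[
|f_{>i_0}(y)|\le\sum_{i>i_0}|\lambda_i|\,|f_i(y)|\le\sum_{i>i_0}|f_i(y)|,
\]
using only $|\lambda_i|\le 1$ and the triangle inequality, together with $|f_i(y)|\lesssim |\supp y|/m_i$; the latter comes directly from writing $f_i=(1/m_i)\sum_j g_j$ with the $g_j$ successive, so at most $|\supp y|$ of them meet $\supp y$, and each $g_j\in D_{G_Z}$ contributes at most $\|y\|$. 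Thus condition~(a), namely $\sum_i 1/m_i<1/10$, already suffices; your appeal to Cauchy--Schwarz and the square-summability $\sum_i 1/m_i^2<\infty$ is correct but unnecessary. On the other hand, the uniform $\ell_\infty$-bound $\|g\|_\infty\le C$ for all $g\in D_{G_Z}$ that you prove by induction is essentially what the paper later relies on (implicitly, with $C=1$) in Lemma~\ref{SpirosLemma1}, so your detour is not wasted---it just makes explicit something the paper takes for granted. Your caution about the base case is also well placed: one does need the basis $(z_i)$ to be normalized (or at least bounded) for the coefficients $a_i$ of a $G_Z$-functional to be uniformly bounded, and the paper assumes this tacitly.
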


\begin{proof}
Let $i_0$ such that $\sum_{i > i_0} |\supp ~y|/ m_i< \ee$.  The evaluation follows easily.
\end{proof}

The next lemma follows from standard arguments which, in the interest of brevity, we omit.

\begin{lem}
Let $f \in D_{G_Z}\setminus G_Z$ such that $w(f)=\{m_{j_0}\}$ for some $j_0 \in \nn$.  Let $j>j_0$ and $(x_i)_{i=1}^{n_j}$ be a normalized block sequence in $T_{G_Z,2}$. Then
$$f (\frac{1}{n_j} \sum_{i=1}^{n_j} x_{i} ) < \frac{3}{m_{j_0}} .$$
\label{smallweight}
\end{lem}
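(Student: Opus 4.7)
The plan is to unpack $f$ from the inductive definition of the norming set, then exploit the disjointness of supports to show that all but at most $n_{j_0}$ of the $x_i$ interact with only one summand of $f$.

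First I would reduce to the case of a purely weighted functional. If $f\in G_n^{\prime}$ with $w(f)=m_{j_0}$, then by definition $f = \frac{1}{m_{j_0}} \sum_{r=1}^d f_r$ with $f_1 < \cdots < f_d$ in $G_{n-1}$ and $d \leq n_{j_0}$. If instead $f\in G_n^{\prime\prime}$ with $w(f)=\{m_{j_0}\}$, then only the coefficient $\lambda$ indexed by $j_0$ is non-zero, so $f = \lambda g$ for some $g\in G_{n-1}^{\prime}$ of weight $m_{j_0}$ with $0\leq\lambda\leq 1$, and any estimate for $g$ immediately transfers to $f$. So it suffices to argue for $f$ of the first form.

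Next, the combinatorial step. Since $f_1 < \cdots < f_d$, the supports $\supp(f_r)$ are disjoint intervals in $\nn$ arranged in order, separated by $d-1$ ``gaps''. Split $\{1,\dots,n_j\}$ into $A$, the set of $i$ for which $\supp(x_i)$ meets at most one $\supp(f_r)$, and $B$, the rest. An index $i$ lies in $B$ precisely when $\supp(x_i)$ contains at least one of these gaps; as $(x_i)$ is a block sequence, its supports are pairwise disjoint, so each gap is crossed by at most one $x_i$. This gives $|B|\leq d-1\leq n_{j_0}$. For $i\in A$ only a single $f_r$ acts, hence $|f(x_i)|\leq (1/m_{j_0})\|x_i\|=1/m_{j_0}$; for $i\in B$ I crudely bound $|f(x_i)|\leq \|f\|\,\|x_i\|\leq 1$, using that $f$ lies in the norming set.

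Summing,
\[
f\!\left(\frac{1}{n_j}\sum_{i=1}^{n_j} x_i\right) \leq \frac{1}{n_j}\!\left(\frac{|A|}{m_{j_0}} + |B|\right) \leq \frac{1}{m_{j_0}} + \frac{n_{j_0}}{n_j}.
\]
Finally, the growth conditions (b) and (c) ensure that $n_j$ dwarfs $n_{j_0}m_{j_0}$ whenever $j>j_0$, so $n_{j_0}/n_j < 2/m_{j_0}$, which yields the claimed bound of $3/m_{j_0}$. The one step that needs care, though not really an obstacle, is the combinatorial estimate $|B|\leq d-1$: it depends essentially on both the $(f_r)$ and the $(x_i)$ being supported on pairwise disjoint intervals listed in increasing order. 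Everything else is routine estimation.
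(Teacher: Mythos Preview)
Your argument is correct and is precisely the standard argument the paper alludes to when it writes ``follows from standard arguments which, in the interest of brevity, we omit.'' The decomposition into indices meeting a single $f_r$ versus those straddling several, together with the injectivity argument giving $|B|\le d-1\le n_{j_0}$, is exactly the intended route. One small caveat: the final inequality $n_{j_0}/n_j<2/m_{j_0}$ is not a formal consequence of the limit conditions (b) and (c) as stated (those are asymptotic, not pointwise), but rather an implicit standing assumption on the rapid growth of $(n_j)$ relative to $(m_j)$ that is standard in constructions of this type; the paper tacitly relies on it as well.
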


\begin{proof}[Proof of Theorem \ref{noellporc0}] It is easy to see
that niether $\ell_p$, for $1 < p <\infty$,
nor $c_0$ is are finitely block representable in $T_{G_Z.2}$ and therefore
can not be admitted as a spreading model.  Indeed, let $(y_k)_{k=1}^{\infty}$ be a block sequence in $T_{G,2}$.  For every $i \in \nn$ we have
$$\| \sum_{k=1}^{n_i} y_k \| \geq \frac{n_i}{m_i}.$$
We have assumed that for all $\alpha >0$, $\lim_{i \to \infty} n_i^\alpha/ m_i = \infty$ (assumption (c)).  Therefore for no $p>1$ does there
exist a $C_p$ such that for every $i \in \nn$,  $\| \sum_{k=1}^{n_i} y_k \| \leq C_p n_i^\frac{1}{p}$.

It remains to show that $T_{G_Z,2}$ does not admit an $\ell_1$ spreading model.  Let
$(w_n)$ be a bounded sequence generating an $\ell_1$ spreading model.   We must pass
to further subsequences of $(w_n)$ to achieve additional properties.   First,  it is well-known that since $(w_n) \subset X$ generates
an $\ell_1$-spreading model then for $0< \ee<10^{-4}$ we can find a block sequence $(y_n)$ of $(w_n)$
which generates a  $(1-\ee)$-$\ell_1$ spreading model.   Secondly, since $Y_{G_Z}$ does not admit
an $\ell_1$ spreading model, we may apply the Erdos-Madigor theorem
\cite{ErdosM} to find an $n_0 \in \nn$ and a block sequence $(z_n)$ of $(y_n)$ such that $z_n= \sum_{i \in F_n} x_i/n_0$ where
$|F_n|=n_0$ for all $n \in \nn$ and $\|z_n\|_{G_Z}<\ee$.  Passing to a further subsequence of $(z_n)_n$ (for example, $(z_{kn_0})_{k=1}^\infty$) we have a subsequence $(x_n)$ of $(z_n)$ satisfying
\begin{itemize}
\item$(x_n)$ generates and $\ell_1$ spreading model with constant $(1-\ee)$.
\item$\|x_n\|_{G_Z}< \ee$ for all $n \in \nn$.
\end{itemize}
The next step is to prove the following claim.

\begin{claim}There is an $i_0 \in \nn$ such that for each $n > 2$ there is a $\psi^n \in D_{G_Z}\setminus G_Z$ satisfying
\begin{itemize}
\item[(a)] $w(\psi^n) \leq m_{i_0}$;
\item[(b)] $\psi^n(x_n) > 1 - 4\sqrt{\ee}$.
\end{itemize}
\label{claim13}
\end{claim}
Since $(x_n)$ is a $(1-\ee)$-$\ell_1$ spreading model for each $n >2$ there is a $\phi^n= \sum_{i = 1}^k \lambda_i^n \phi^n_i $
such that $\phi^n(x_2 + x_n)> 2(1-\ee)$. It follows that
$$\phi^n(x_2)> 1- 2\ee \mbox{ and } \phi^n(x_n)> 1-2\ee.$$
Apply Lemma \ref{finitesupport} for $x_2$ and $\ee$ to find an $i_0$
such that for each $n \geq 2$, $\phi^n_{>i_0}(x_2) <\ee$.  We claim that
$\phi^n_{\leq i_0}$ is our desired $\psi^n$.  By definition $\phi^n_{\leq i_0}$
satisfies (a).  It suffices to prove that (b) holds. Notice that
\begin{equation}
\phi^n_{\leq i_0} (x_2) = \phi^n (x_2)- \phi^n_{>i_0} (x_2) >1-3\ee. \label{y2est}
\end{equation}
Now observe that
\begin{equation}
\phi^n_{\leq i_0} (x_n) > 1- 2 \ee - \sum_{i = i_0 +1}^k \lambda_i^n \phi^n_i (x_n). \label{lowersplitting}
\end{equation}
Using (\ref{y2est})
\begin{equation}
(\sum_{i=1}^{i_0} (\lambda_i^n)^2 )^\frac{1}{2} \geq \phi^n_{\leq i_0} (x_2) > 1- 3\ee.
\label{i}
\end{equation}
From (\ref{i}) we have,
\begin{equation}
 (\sum_{i=i_0+1}^\infty \lambda_i^n \phi^n_i(x_n) )^2 \leq  \sum_{i=i_0+1}^\infty (\lambda_i^n)^2 =
\sum_{i=1}^\infty (\lambda_i^n)^2 - \sum_{i=1}^{i_0} (\lambda_i^n)^2  < 3 \ee.\label{ii}
\end{equation}
Combining (\ref{lowersplitting}), (\ref{ii}) and the fact that $2\ee + \sqrt{3\ee} < 4\sqrt{\ee}$, (b) follows.

What (a) and (b) together tell us is that for {\em every} $n>2$ there is a functional $\psi^n$ which
almost norms $x_n$ and has only `small' (less than some fixed $m_{i_0}$) weights.  This allows
use to show, in the the next lemma, that no element in the sequence $(x_n)_n$ can be
normed by functionals with weights larger that $m_{i_0}$.
\begin{lem}
Let $n >2$ and $\phi \in D_{G_Z}$ with $w(\phi) > m_{i_0}$.  Then
$$\phi(x_n) < \frac{1}{2}. $$ \label{largeweight}
\end{lem}
\begin{proof} Let $\phi \in D_{G_Z}$ with $w(\phi)> m_{i_0}$.  Then $f= (\psi^n + \phi)/\sqrt{2} \in D_{G_Z}$.  Using Claim \ref{claim13} (b)
$$\phi(x_n) = \sqrt{2}f(x_n) - \psi^n(x_n) <  \sqrt{2} - 1 + (4\sqrt{\ee})<\frac{1}{2}.$$
As desired.
\end{proof}
We can now arrive at a contradiction using the following vector
$$z=\frac{1}{n_{i_0+1}} \sum_{q=1}^{n_{i_0+1}} x_{n_{i_0+1}+q}.$$
Find $\phi \in D_{G_Z}$ such that $\phi(z)>1-\ee$.
Using the Lemma \ref{smallweight} and Lemma \ref{largeweight} we have:
\begin{equation}
\begin{split}
.9< 1-\ee < \phi (z) & = \phi_{\leq i_0}(z)+ \phi_{>i_0}(z) < \sum_{i=1}^{i_0} \lambda_i \frac{3}{m_i} + \frac{1}{2} < \frac{3}{10} + \frac{1}{2}.
\end{split}
\end{equation}
This is a contradiction.
\end{proof}

We now describe the tree decomposition of the functionals in $D_{G_Z}$.  First we must set some notation.
Let $\nn^{<\nn}$ be the set of all finite tuples of $\nn$.  For $\delta, \gamma \in \nn^{<\nn}$ we
write $\delta \prec \gamma$ if $\delta$ is an initial segment of $\gamma$.  Let $\gamma(i)$ the the $i^{th}$ coordinate of $\gamma$.
Let $\nn^d$ denote the set of $d$-tuples of $\nn$ and $\nn^{\leq d}= \cup_{i \leq d} \nn^i$.  For $\gamma \in \nn^{d}$ let
$Im_{\gamma} \subset \nn^{d+1}$ denote the immediate successors of $\gamma$.
The following proposition describes a decomposition of the functionals in $D_{G_Z}$.  Tree decompositions
are a ubiquitous component in constructions of this type.  As such, we omit the proof of the proposition.

\begin{prop}
Let $n \in \nn$ and $f \in G_n \setminus G_0$.  Then there is a set $\mathcal{T}_f \subset \nn^{\leq 2n} \cup \{\emptyset \}$
and a collection $(f_\gamma)_{\gamma \in \mathcal{T}_f}$ of functionals which we call a {\em tree decomposition}
satisfying the following properties:
\begin{enumerate}
\item $f_{\emptyset} = f$.
\item Let $S_\gamma^f = Im_\gamma \cap \mathcal{T}_f$ and $\mathcal{T}_f^{d}= \mathcal{T}_f \cap \nn^{d}$.  If $\gamma \in \mathcal{T}_f$
and $S_\gamma^f =\emptyset$ we say that $\gamma$ is a terminal node.  In this case, $f_\gamma \in G_0$.
\item Let $0 \leq k <n$.  If $ \gamma \in \mathcal{T}_f^{(2k)}$ then
$$f_\gamma = \sum_{\delta \in S_\gamma^f} \lambda_\delta f_\delta \mbox{ where } w(f_\delta)=m_{\delta(2k+1)},~\sum_{\delta \in S_\gamma^f} \lambda_\delta^2 \leq 1$$
If $\gamma \in \mathcal{T}_f^{(2k+1)}$, then
$$f_\gamma = \frac{1}{m_{\gamma(2k+1)}}\sum_{\delta \in S_\gamma^f} f_\delta.$$
where $(f_\delta)_{\delta \in S_{\gamma}^f}$ are successive and $|S_{\gamma}^f| \leq n_{\gamma(2k+1)}$.
\end{enumerate}
\end{prop}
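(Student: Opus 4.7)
The plan is to induct on $n$, essentially recording the recursive definition of $D_{G_Z}$ as a tree. The case $n=0$ is vacuous since $G_0 \setminus G_0 = \emptyset$, so the base case is $n=1$: any $f \in G_1 \setminus G_0$ lies in $G''_1$ and hence unfolds as
\[
f = \sum_{i \in I} \lambda_i f_i, \qquad f_i = \frac{1}{m_i}\sum_{k \in K_i} g_{i,k},
\]
with $g_{i,k} \in G_0$, $\sum_i \lambda_i^2 \le 1$, and $w(f_i)=m_i$ (the latter being forced by the definition of $G''_1$). Taking
\[
\ttt_f = \{\emptyset\} \cup \{(i) : i \in I\} \cup \{(i,k) : i \in I,\ k \in K_i\}
\]
together with $f_\emptyset = f$, $f_{(i)} = f_i$, $f_{(i,k)} = g_{i,k}$, the odd-level node $(i)$ has last coordinate $i$ which records the weight index $m_i = w(f_i)$ demanded by (3), and the terminal nodes $(i,k)$ lie at even level $2$ with functionals in $G_0$.

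For the inductive step, assume the proposition at level $n$ and fix $f \in G_{n+1} \setminus G_0$. If $f \in G_n$, the IH applies directly. Otherwise $f \in G''_{n+1} \setminus G_n$ and the same unfolding produces $f = \sum_i \lambda_i f_i$ with $f_i = (1/m_i)\sum_k g_{i,k}$ and each $g_{i,k} \in G_n$. For each pair $(i,k)$ there are two cases: if $g_{i,k} \in G_0$, declare $(i,k)$ a terminal node of $\ttt_f$ and set $f_{(i,k)}=g_{i,k}$; otherwise $g_{i,k} \in G_n \setminus G_0$ and the IH supplies a tree $\ttt_{g_{i,k}} \subset \nn^{\le 2n} \cup \{\emptyset\}$ with functionals $(g_{i,k,\tau})$, which I would attach beneath $(i,k)$ by prepending $(i,k)$ to every nonempty tuple of $\ttt_{g_{i,k}}$ and setting $f_{(i,k,\tau)} = g_{i,k,\tau}$.

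Since each attached subtree has all its levels shifted by exactly $2$, the even/odd parity is preserved, so properties (2) and (3) at levels $\ge 2$ transfer verbatim from the IH, while at levels $0$ and $1$ they record the convex combination and weighted sums from the top of the unfolding. The depth bound $|\gamma| \le 2(n+1)$ is immediate from $\ttt_{g_{i,k}} \subset \nn^{\le 2n}$ together with the two prepended coordinates.

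No serious difficulty arises; the proposition is essentially a re-encoding of the recursive definition of $D_{G_Z}$. The one point demanding care is the bookkeeping on the last coordinate of each node: at every odd level $2k+1$, $\gamma(2k+1)$ must record the weight index of the weighted sum $f_\gamma = (1/m_{\gamma(2k+1)})\sum_\delta f_\delta$, whereas at even levels the last coordinate merely distinguishes children of a convex combination and may be assigned freely, so one should fix a convention (say, enumerating the summands $f_i$ appearing in each convex combination) to keep the tuples well defined.
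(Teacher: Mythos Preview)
The paper omits this proof entirely, remarking only that tree decompositions are standard in constructions of this type. Your inductive argument is exactly the expected one: unfold the recursive definition of $G_n$, using the index $i$ in the $\ell_2$-convex combination $\sum_i \lambda_i f_i$ (where $w(f_i)=m_i$) as the odd-level coordinate so that $\gamma(2k+1)$ automatically records the weight index, and then graft the inductively obtained subtrees beneath the level-$2$ nodes so that parity and the depth bound are preserved.
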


We need  one more definition.

\begin{defn}
Let $f \in D_{G_Z} \setminus G_Z$ and $\ttt_f \subset \nn^{<\nn} \cup \{\emptyset \}$ such that
the collection $(f_\gamma)_{\gamma \in \mathcal{T}_f}$ is a tree decomposition.
\begin{enumerate}
\item For $\alpha \in \ttt_f$ let $|\alpha|=k$ whenever $\alpha \in \nn^k$.
\item  Let $\mathcal{M}_f =\{ \alpha \in \ttt_f : \alpha \mbox{ is a terminal node of }\ttt_f\}$.
\end{enumerate}
\end{defn}

\section{$Z$ is a quotient of $T_{G_Z,2}$ }

As the title above suggests, the main objective of this section is to prove that $Z$ is a quotient of $T_{G_Z,2}$.  After
we establish this, we will proof the main theorem and one proposition.
To begin we require two lemmas.

\begin{lem}
Let $n \in \nn$ and $f \in D_{G_Z}$.  Suppose that for all $\alpha \in \aaa_f$, $|\alpha | \geq 2n$.
Then $\| f \|_\infty \leq 10^{-n}$. \label{SpirosLemma1}
\end{lem}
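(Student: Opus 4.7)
My plan is to induct on $n$. For $n=0$ the desired bound $\|f\|_\infty \le 1$ is standard, following from bimonotonicity (so elements of $G_Z$ have coefficients in $[-1,1]$) together with the observation that both norming operations—the weighted sum in $G_n^\prime$ and the $\ell_2$-convex combination in $G_n^{\prime\prime}$—preserve this sup-norm bound.

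For the inductive step, assume the lemma for $n-1$, and let $f = f_\emptyset$ have all terminal descendants at depth $\ge 2n$. Since $2n \ge 2$, the root is not terminal, and I would expose two levels of the tree decomposition at once. At the (even-level) root, write $f = \sum_\delta \lambda_\delta f_\delta$ with $\sum \lambda_\delta^2 \le 1$ and $w(f_\delta) = m_{\delta(1)}$. Each level-$1$ child decomposes further as $f_\delta = m_{\delta(1)}^{-1} \sum_\epsilon f_\epsilon$ over successively-supported level-$2$ grandchildren $\epsilon$. Now every terminal descendant of any such $\epsilon$ has depth $\ge 2n - 2 = 2(n-1)$ from $\epsilon$, so the inductive hypothesis applied to the subtree rooted at $\epsilon$ gives $\|f_\epsilon\|_\infty \le 10^{-(n-1)}$. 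Successivity of the $\epsilon$'s then yields $\|f_\delta\|_\infty \le 10^{-(n-1)}/m_{\delta(1)}$.

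The final step is a Cauchy--Schwarz applied coordinate-wise: for any $e_j$,
\[
|f(e_j)| \le \Bigl( \sum_\delta \lambda_\delta^2 \Bigr)^{1/2} \Bigl( \sum_\delta |f_\delta(e_j)|^2 \Bigr)^{1/2} \le 10^{-(n-1)} \Bigl( \sum_\delta \frac{1}{m_{\delta(1)}^2} \Bigr)^{1/2}.
\]
Because the indices $\delta(1)$ are distinct positive integers, the last sum is dominated by $\sum_{i=1}^\infty 1/m_i^2$, which by the elementary inequality $\sum a_i^2 \le (\sum a_i)^2$ for non-negative terms is at most $(\sum 1/m_i)^2 < 1/100$ by hypothesis (a). Thus $|f(e_j)| \le 10^{-(n-1)} \cdot (1/10) = 10^{-n}$, closing the induction.

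The main point to get right is the pairing of levels: each ``two-level descent'' (odd-then-even) must produce a factor $1/10$, and hypothesis (a) on the sequence $(m_i)$ is calibrated precisely so that the Cauchy--Schwarz step at the even level yields exactly this factor. A minor technicality is the edge case where $f$ is itself a weighted functional at the root; this is absorbed into the same framework by regarding it as a trivial one-term $\ell_2$-convex combination.
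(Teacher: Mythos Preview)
Your argument is correct and follows essentially the same two-level inductive descent as the paper's proof. The only difference is cosmetic: where you apply Cauchy--Schwarz and then bound $\sum_\delta 1/m_{\delta(1)}^2 \le (\sum_i 1/m_i)^2 < 1/100$, the paper simply uses $\lambda_\delta \le 1$ together with $\sum_\delta 1/m_{\delta(1)} \le \sum_i 1/m_i < 1/10$ to reach the same factor of $1/10$ per pair of levels.
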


\begin{proof}
We proceed by induction on $n$.  For $n =1$ we have
\begin{equation*}
\| f \|_\infty = \|\sum_{\delta \in S_\emptyset } \frac{\lambda_\delta}{m_{\delta(1)}} \sum_{ \beta \in S_\delta} f_\beta \|_\infty  \leq \sum_{\delta \in S_\emptyset } \frac{1}{m_{\delta(1)}}  \sup_{ \beta \in S_\delta}  \| f_\beta \|_\infty < \frac{1}{10}.
\end{equation*}
In the above we used the for each $\delta \in S_\emptyset$, the functionals $(f_\beta)_{\beta \in S_\delta}$ have disjoint support.
Assume the claim for some $n \geq 1$.   We will prove it for $n+1$.
\begin{equation*}
\| f \|_\infty = \|\sum_{\delta \in S_\emptyset } \frac{\lambda_\delta}{m_{\delta(1)}} \sum_{ \beta \in S_\delta} f_\beta \|_\infty  \leq \sum_{\delta \in S_\emptyset } \frac{1}{m_{\delta(1)}}  \sup_{ \beta \in S_\delta}  \| f_\beta \|_\infty < \sum_{j=1}^\infty \frac{1}{m_j}\frac{1}{10^n} \leq \frac{1}{10^{n+1}}
\end{equation*}
In the above we used that for each $\delta \in S_\emptyset$, the functionals $(f_\beta)_{\beta \in S_\delta}$ have disjoint support and have terminal nodes each
of height greater than $2n$.
\end{proof}

\begin{lem}
Let $j_0 \in \nn$ and $f \in D_{G_Z}$ such that for all $\alpha \in \aaa_f$ there is a $\beta \prec \alpha$ such that $f_\beta$ is weighted and  $w(f_\beta) \geq m_{j_0}$.  Then $\|f\|_\infty \leq 2 \sum_{j \geq j_0} \frac{1}{m_j}$. \label{SpirosLemma2}
\end{lem}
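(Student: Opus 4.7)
The plan is to prove the bound by induction on the height of the tree decomposition $\ttt_f$. To make the induction go through cleanly, I would prove the slightly more flexible statement applicable also to weighted sub-functionals arising as internal nodes: for any functional $g$ admitting a tree decomposition such that every terminal node of $\ttt_g$ has a strict ancestor $\beta$ with $g_\beta$ weighted and $w(g_\beta)\geq m_{j_0}$, one has $\|g\|_\infty\leq 2\sum_{j\geq j_0}1/m_j$. When $g\in G_0$ the hypothesis is vacuous (there is no strict ancestor), so the base case is trivial.

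For the inductive step I would split on the structural form of $g$ at its root. Case 1 is when $g=\frac{1}{m_j}\sum_i h_i$ is weighted with successive $h_i$. If $j\geq j_0$ then $\|g\|_\infty\leq 1/m_j\leq\sum_{j'\geq j_0}1/m_{j'}$ and the bound is immediate. If instead $j<j_0$, then the big-weight ancestor promised by the hypothesis for each terminal must lie strictly inside some $\ttt_{h_i}$ (the root $g$ and its ancestors do not qualify), so the hypothesis descends to each $h_i$; the inductive hypothesis yields $\|h_i\|_\infty\leq 2\sum_{j\geq j_0}1/m_j$, and by disjointness of the supports $\|g\|_\infty\leq m_j^{-1}\max_i\|h_i\|_\infty$, which is again $\leq 2\sum_{j\geq j_0}1/m_j$.

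Case 2 is when $g=\sum_i\lambda_i g_i$ is a convex combination with $w(g_i)=m_i$ and $\sum_i\lambda_i^2\leq 1$. I would partition the indices as $I_1=\{i\geq j_0\}$ and $I_2=\{i<j_0\}$. For $i\in I_1$ the trivial weighted bound gives $\|g_i\|_\infty\leq 1/m_i$. For $i\in I_2$ the hypothesis descends to $g_i$ by the same reasoning as in Case 1, and applying Case 1 of the induction to $g_i$ gives $\|g_i\|_\infty\leq(2/m_i)\sum_{j\geq j_0}1/m_j$. Summing $\|g\|_\infty\leq\sum_i\lambda_i\|g_i\|_\infty$ and applying Cauchy--Schwarz on $I_1$,
\[
\sum_{i\in I_1}\lambda_i/m_i\;\leq\;\Bigl(\sum_i\lambda_i^2\Bigr)^{1/2}\Bigl(\sum_{i\geq j_0}1/m_i^2\Bigr)^{1/2}\;\leq\;\sum_{i\geq j_0}1/m_i,
\]
while the $I_2$ contribution is, using hypothesis (a),
\[
\sum_{i\in I_2}\lambda_i\cdot\frac{2}{m_i}\sum_{j\geq j_0}1/m_j\;\leq\;2\sum_{j\geq j_0}1/m_j\cdot\sum_{i<j_0}\frac{1}{m_i}\;<\;\frac{1}{5}\sum_{j\geq j_0}1/m_j.
\]
Adding the two contributions gives $\|g\|_\infty\leq\tfrac{6}{5}\sum_{j\geq j_0}1/m_j\leq 2\sum_{j\geq j_0}1/m_j$, closing the induction.

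The main subtlety is the bookkeeping for inheritance of the hypothesis: whenever $g$ (or a factor $g_i$ with $i<j_0$) is not itself a big-weight node, one must verify that the promised big-weight ancestor of each terminal cannot be $g$ or any of its proper ancestors, and hence lies in the relevant child's sub-tree. Once this is settled, the entire arithmetic rests on the slack provided by condition (a), $\sum_{i}1/m_i<1/10$, which is what gives the factor $\tfrac{6}{5}<2$.
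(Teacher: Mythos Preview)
Your proof is correct and follows the same strategy as the paper's: both obtain the trivial bound $1/m_j$ at weighted nodes with $j\geq j_0$, the sharpened bound $(2/m_j)\sum_{k\geq j_0}1/m_k$ at weighted nodes with $j<j_0$, and then combine these at each convex-combination node using $\sum_i 1/m_i<1/10$. The paper packages this as a three-case tree induction (it introduces the minimal big-weight ancestor $\beta_\alpha$ of each terminal and carries the bounds $1/w(f_\gamma)$, $(2/w(f_\gamma))\sum_{j\geq j_0}1/m_j$, and $2\sum_{j\geq j_0}1/m_j$ according to the position of $\gamma$ relative to the $\beta_\alpha$'s), whereas you induct on height with a single hypothesis and recover the sharper weighted estimate by re-running your Case~1 argument inside Case~2; note that ``applying Case~1 of the induction to $g_i$'' really means ``apply the height-inductive hypothesis to the children of $g_i$ and then repeat the Case~1 arithmetic,'' so your formulation would be cleaner if, like the paper, you built the sharper weighted bound into the inductive hypothesis from the start. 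Your Cauchy--Schwarz on $I_1$ is correct but unnecessary, since $\lambda_i\leq 1$ already gives $\sum_{i\in I_1}\lambda_i/m_i\leq\sum_{i\geq j_0}1/m_i$.
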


\begin{proof}
For every $\alpha \in \aaa_f$ let
$$\beta_\alpha= \min\{\beta : \beta \prec \alpha,~ f_\beta \mbox{ is weighted and }w(f_\beta) \geq m_{j_0} \}.$$
Notice that if $\alpha \not=\alpha'$ are in $\aaa_f$ then $\beta_\alpha$ is either equal to or not comparable with $\beta_{\alpha'}$.
We will prove the following by induction:  For all  $\gamma \in \ttt_f $ such that there is an $\alpha \in \aaa_f$ with $\gamma \preceq \beta_\alpha$ one of the following holds:
\begin{enumerate}
\item If $\gamma = \beta_\alpha$ for some $\alpha \in \aaa_f$ then $\|f_\gamma \|_\infty \leq \frac{1}{w(f_\gamma)}.$
\item If $\gamma \prec \beta_\alpha$ for all $\alpha \in \aaa_f$ with $\gamma \prec \alpha$ and $f_\gamma$ is weighted then
$$\| f_\gamma \|_\infty \leq \frac{2}{w(f_\gamma)} \sum_{j \geq j_0} \frac{1}{m_j}.$$
\item If $\gamma \prec \beta_\alpha$ for all $\alpha \in \aaa_f$ with $\gamma \prec \alpha$ and $f_\gamma$ is not weighted
then
$$\| f_\gamma \|_\infty \leq 2 \sum_{j \geq j_0} \frac{1}{m_j}.$$
\end{enumerate}
After we prove the above, by taking $\gamma = \emptyset$, the lemma follows.

For the base case of the induction, we suppose that $\gamma= \beta_\alpha$ for some $\alpha \in \aaa_f$.  Since
it is clear that for all $\alpha \in \aaa_f$, $\|f_{\beta_\alpha} \|_\infty \leq 1/w(f_{\beta_\alpha})$, we are done.

Let $\gamma \in \ttt_f$ such that $\gamma \prec \beta_\alpha$ for all $\alpha \in \aaa_f$ with $\gamma \prec \alpha$.  Assume that for all $\tilde\gamma$ with
$\gamma \prec \tilde\gamma \preceq \beta_\alpha$ for some $\beta_\alpha$, (1), (2) or (3) holds (depending on $\tilde\gamma$).

Assume that $f_\gamma$ weighted.  Then
\begin{equation}
\|f_\gamma \|_\infty = \| \frac{1}{w(f_\gamma)} \sum_{\delta \in S_\gamma} f_\delta\|_\infty \leq \frac{1}{w(f_\gamma)} \max_{\delta \in S_\gamma} \|f_\delta\|_\infty \leq  \frac{2}{w(f_\gamma)}  \sum_{j \geq j_0} \frac{1}{m_j}
\end{equation}
In the above we used the induction hypothesis for $\delta \in S_\gamma$ since $\gamma \prec \delta \preceq \beta_\alpha$ whenever $\gamma \prec \beta_\alpha$.  Note
that if $\delta =\beta_\alpha$ then $\|f_\delta\|_\infty \leq 1/m_{j_0} < 2 \sum_{j \geq j_0} 1/m_j$.

Assume that $f_\gamma$ is not weighted.  Let $A_\gamma = \{ \delta \in S_\gamma : \delta = \beta_\alpha, ~\alpha \in \aaa_f\}$.  Splitting the set $S_\gamma$ and applying the induction
hypothesis we have
\begin{equation}
\begin{split}
\|f_\gamma \|_\infty & \leq \sum_{\delta \in S_\gamma}\| f_\delta\|_\infty =  \sum_{\delta \in A_\gamma }\| f_\delta\|_\infty +  \sum_{\delta \in S_\gamma \setminus A_\gamma }\| f_\delta\|_\infty \\
& \leq \sum_{j \geq j_0} \frac{1}{m_j} + \sum_{\delta \in S_\gamma \setminus A_\gamma } \frac{1}{w(f_\delta)} \sum_{j \geq j_0} \frac{1}{m_{j}} \leq 2 \sum_{j \geq j_0} \frac{1}{m_{j}}.
\end{split}
\end{equation}
In the above we used that $\sum_{\delta \in S_\gamma \setminus A_\gamma } \frac{1}{w(f_\delta)} < 1$.
\end{proof}

We are now ready to prove the main proposition of this section.

\begin{prop}
Let $Q: T_{G_Z,2} \to Z$ be the bounded linear map defined by $Q(e_i) = z_n$ for $i \in \Lambda_n$.  Then
$Q$ is a quotient map. \label{thequotient}
\end{prop}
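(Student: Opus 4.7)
The plan is to mirror the proof of Proposition \ref{manyitems}(3), replacing the single-step ground-set estimate by a tree analysis controlled by Lemmas \ref{SpirosLemma1} and \ref{SpirosLemma2}. First I would verify that $Q$ is bounded with $\|Q\|\leq 1$: given a finitely supported $y\in T_{G_Z,2}$ and any $w=\sum_{i=1}^d b_iz_i^*\in S_{Z^*}$, choose an interval $E$ containing $\supp(y)$; then $g=\sum_{i=1}^d b_i\sum_{n\in E\cap\Lambda_i}e_n^*$ lies in $G_Z\subset D_{G_Z}$ and satisfies $g(y)=w(Qy)$, so $\|Qy\|_Z\leq\|y\|_{T_{G_Z,2}}$. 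It remains to produce, for each $z\in S_Z$, a lift $y\in T_{G_Z,2}$ with $Qy=z$ and $\|y\|_{T_{G_Z,2}}\leq C$ for an absolute constant.

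Given a finitely supported $z=\sum_{i=1}^d a_iz_i$ (WLOG by density), the naive choice $y=\sum_i a_ie_{\ell_i}$ used for $Y_{G_Z}$ in Proposition \ref{manyitems}(3) already fails in $T_{G_Z,2}$: the weighted functional $\frac{1}{m_j}\sum_{k=1}^{n_j}e_{\ell_k}^*\in D_{G_Z}$ evaluates to $n_j/m_j$, which by assumption (c) is not controlled by $C\|z\|_Z$ in general (e.g., when $Z=\ell_2$). The remedy is to take $y=\sum_{i=1}^d a_i\bar y_i$, where each $\bar y_i$ is an $\ell_1$-type average of basis vectors supported in $\Lambda_i$, of the form $\bar y_i=\frac{1}{n_{j_i}}\sum_{k=1}^{n_{j_i}}e_{\ell_{i,k}}$, with $(j_i)$ growing very fast and the ranges well-separated. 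Then $Q\bar y_i=z_i$ so $Qy=z$; the feasibility of choosing such averages with bounded norm rests on $Y_{G_Z}$ not admitting an $\ell_1$ spreading model (Proposition \ref{nosminground}) together with Theorem \ref{Cesaro}, producing Cesaro-small averages on demand.

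To bound $\|y\|_{T_{G_Z,2}}=\sup_{f\in D_{G_Z}}f(y)$, I would split cases. For $f\in G_Z$, the same bimonotonicity argument of Proposition \ref{manyitems}(3) yields $f(y)\leq\|z\|_Z$. For $f\in D_{G_Z}\setminus G_Z$, use the tree decomposition $(f_\gamma)_{\gamma\in\ttt_f}$: fix $\ee>0$, choose $j_0$ with $\sum_{j\geq j_0}1/m_j<\ee$, and write $f=f_1+f_2$ by partitioning $\aaa_f$ according to whether the path to $\alpha$ contains a weighted $f_\beta$ with $w(f_\beta)\geq m_{j_0}$. Lemma \ref{SpirosLemma2} gives $\|f_1\|_\infty\leq 2\ee$, and the rapid spread of $(\ell_{i,k})$ together with the $\ell_1$-average structure of each $\bar y_i$ is used to force $f_1(y)\leq C\ee\|z\|_Z$. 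For $f_2$, the initial segments of its branches use only weights $<m_{j_0}$; such trees have bounded depth (controlled by $j_0$), so that $f_2(y)$ is essentially given by a bounded combination of $G_Z$-functionals applied to $z$ (with Lemma \ref{SpirosLemma1} absorbing any deeper subtree), giving a bound $C(j_0)\|z\|_Z$.

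The main obstacle is the estimate for $f_1(y)$: because $d$ is arbitrary, the crude estimate $f_1(y)\leq\|f_1\|_\infty\cdot\|y\|_1$ is far too weak. The argument must simultaneously exploit (i) the rapid spread of the supports of $(\bar y_i)$, so that any single terminal $f_\alpha$ engages with only a small portion of $y$, and (ii) the $\ell_1$-average structure of each $\bar y_i$, which absorbs weighted functionals of weight larger than $m_{j_i}$. Establishing this ``basic inequality'' for $T_{G_Z,2}$, and in particular choosing the $j_i$ fast enough relative to the combinatorics of the tree and to the preceding data, is the technical heart of the proposition.
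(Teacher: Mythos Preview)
Your lift is structured the wrong way round, and this is a genuine failure rather than a technicality. Take $Z=c_0$ and $z=\sum_{i=1}^d z_i$, so $\|z\|_Z=1$. Your vector is $y=\sum_{i=1}^d \bar y_i$ with the $\bar y_i$ range-separated averages in $\Lambda_i$. For each $i$ the ground functional $g_i=\sum_{n\in \range(\bar y_i)\cap\Lambda_i}e_n^*\in G_Z$ satisfies $g_i(\bar y_i)=1$, and the $g_i$ are successive. Hence for any $j$ with $n_j\le d$ the functional $\frac{1}{m_j}\sum_{i=1}^{n_j}g_i\in D_{G_Z}$ gives
\[
\|y\|_{T_{G_Z,2}}\ \ge\ \frac{1}{m_j}\sum_{i=1}^{n_j}g_i(\bar y_i)\ =\ \frac{n_j}{m_j},
\]
which, by assumption~(c), is unbounded in $d$. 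No choice of the $j_i$ rescues this: the estimate uses only that each $\bar y_i$ is a probability average inside $\Lambda_i$ and that the blocks are successive. So the Cesaro/spreading-model heuristic you invoke is a red herring here; it does not prevent $\sum_i\bar y_i$ from behaving like a normalized block sum in $T_{G_Z,2}$.

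The paper's lift reverses the roles of ``coordinate'' and ``average''. One fixes a single $j_0$ depending on $d$ and sets
\[
x=\frac{1}{n_{j_0}}\sum_{t=1}^{n_{j_0}}y_t,\qquad y_t=\sum_{i=1}^d a_i e_{\ell_t^i},
\]
with the $y_t$ successive. The point is that each block $y_t$ is a \emph{copy of $z$}, so $\|y_t\|_{G_Z}=\|z\|_Z=1$; this is exactly what fails for your $\bar y_i$. With this arrangement the crude estimate $|f_k(x)|\le\|f_k\|_\infty\|x\|_1\le\|f_k\|_\infty\cdot d$ is perfectly adequate for the pieces handled by Lemmas~\ref{SpirosLemma1} and~\ref{SpirosLemma2}, since $j_0$ is chosen after $d$ (so your stated ``main obstacle'' evaporates). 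The genuinely delicate part is the remaining piece $f_3$---terminal nodes that are shallow and see only small weights---and there the argument is a counting of $|A_3|\le((j_0-1)n_{j_0-1})^{s_{j_0}}$ terminals against the $n_{j_0}$ blocks $y_t$, using $g_\alpha(y_t)\le 1$ for $g_\alpha\in G_Z$. None of Proposition~\ref{nosminground} or Theorem~\ref{Cesaro} enters the proof.
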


Notice that $Q$ makes the same identifications as the map $Q_{G_Z}$ from Definition \ref{QZ}.

\begin{proof}
Let $z =\sum_{i=1}^d a_i z_i \in Z$ such that $\|z\|_Z =1$.   We will construct a vector $x$ such that $Qx=z$ and $\|x\|=1$; of course,
this is sufficient to prove the proposition.
Assume that $j_0 \in \nn$ satisfies the following:
\begin{enumerate}
\item $\sum_{j\geq j_0} \frac{2d}{m_j} < \frac{1}{5}$
\item $\frac{2((j_0-1) n_{j_0-1})^{s_{j_0}}}{n_{j_0}} < \frac{1}{5d}$
\item $\frac{d}{10^{s_{j_0}}}< \frac{1}{5}$
\end{enumerate}
For each $t \in\{ 1,\ldots , n_j\}$ let $(\ell_t^i)_{i=1}^d \subset \Lambda_i$ such that
$$\ell_t^1 < \ell_t^2 < \cdots < \ell_t^{d} < \ell_{t+1}^1 < \cdots. $$
Now set
$$x= \frac{1}{n_{j_0}} \sum_{t=1}^{n_{j_0}} \sum_{i=1}^d a_i e_{\ell_t^i} = \sum_{i=1}^d a_i \sum_{t=1}^{n_{j_0}} \frac{1}{n_{j_0}} e_{\ell_t^i}.$$
Let $y_t = \sum_i a_i e_{\ell_t^i}$.  Note that $(y_t)_{t=1}^{n_{j_0}}$ is a block sequence and $Qx=z$.
It is easy to see for all $t \in \{1, \ldots, n_{j_0}\}$,
\begin{equation}
\|y_t\|\leq \|y_t\|_1 \leq \sum_{i=1}^d |a_i| \leq d. \label{destimateyt}
\end{equation}
and
\begin{equation}
\|x\|\leq \|x\|_1 \leq \sum_{i=1}^d |a_i| \leq d. \label{destimate}
\end{equation}
We will also need the following easy remark
\begin{rem}
Let $g \in G_Z$ and $t \in \{1, \ldots , n_{j_0}\}$.  Then $g(y_t) \leq 1$. \label{groundonyt}
\end{rem}

Note that for all $t \in \{ 1, \ldots, n_{j_0}\}$,  $Q_{G_Z}(y_t)=z$.  Since $\|z\|=1$ we can apply
Proposition \ref{manyitems} (3) to deduce that $\|y_t\|_{G_Z} =1$.  The remark follows.

We observe first that $\|x\| \geq 1$: Suppose $(b_i)_{i=1}^d$ is a scalar sequence such that
$$ \sum_{i=1}^d a_i b_i = (\sum_{i=1}^d b_i z_i^*) ( \sum_{i=1}^d a_i z_i) =1.$$
By definition $\sum_{i=1}^d b_i (\sum_{t=1}^{n_{j_0}} e^*_{\ell_t^i}) \in G_Z$.  Thus
$$ \| x\| \leq \bigg(\sum_{i=1}^d b_i \sum_{t=1}^{n_{j_0}} e^*_{\ell_t^i}\bigg) \bigg(\sum_{i=1}^d a_i
 \frac{1}{n_{j_0}}\sum_{t=1}^{n_{j_0}} e^*_{\ell_t^i}\bigg)  = \sum_{i=1}^d a_i b_i =1.$$

Therefore, for $f \in D_G$ it suffices to show that $f(x) \leq 1$.  Partition $\aaa_f$ as follows:
\begin{equation*}
\begin{split}
& A_1 = \{ \alpha \in \aaa_f : | \alpha | \geq 2 s_{j_0}\} \\
& A_2 = \{ \alpha \in \aaa_f : \exists ~\beta \prec \alpha, ~w(f_\beta) \geq m_{j_0} \} \\
& A_3 = \aaa_f \setminus (A_1 \cup A_2)
\end{split}
\end{equation*}
Let $f= f_1 + f_2 +f_3$ such that for $i \in \{1,2,3\}$, $f_i$ has $A_i$ as its terminal nodes.  This splits
the rest of the proof naturally into three separate cases.  The first two cases are taken care of by Lemmas
\ref{SpirosLemma1} and \ref{SpirosLemma2} respectively.

Using Lemma \ref{SpirosLemma1}, (\ref{destimate}) and condition (3) on $j_0$
\begin{equation}
|f_1(x)| \leq \|f_1\|_\infty \|x\|_{1} \leq \frac{d}{10^{s_{j_0}}} <\frac{1}{5}. \label{f1}
\end{equation}
Similarly, using Lemma \ref{SpirosLemma2}, (\ref{destimate}) and condition (1) on $j_0$ we have
\begin{equation}
|f_2(x)| \leq \|f_2\|_\infty \|x\|_1 \leq 2d \sum_{j \geq j_0} \frac{1}{m_j} < \frac{1}{5}. \label{f2}
\end{equation}
To estimate $|f_3(x)|$ it is convenient to separate the support of $x$ into 2 sets.  Let
\begin{equation}
\begin{split}
E_2 =\{ t \in \{1, \ldots , n_{j_0} \} :  &\exists~\alpha \in A_3, ~\supp y_t \cap \range g_{\alpha} \not= \emptyset \mbox{ and } \\
& \supp y_t \not\subset \range g_{\alpha} \}
\end{split}
\end{equation}
and $E_1 = \{ 1, \ldots, n_{j_0} \} \setminus E_2$.

First we bound $|E_2|$ (the cardinality of $E_2$).
Observe that $|E_2| < 2|A_3|$.  Indeed, for each $t \in E_2$ there is an $\alpha \in A_3$ and each $\alpha \in A_3$
corresponds to at most $2$ elements of $E_2$.
By definition, for $\alpha \in A_3$: $|\alpha|< 2s_{j_0}$ and for all
$\beta \prec \alpha$ such that that $f_\beta$ is weighted, $w(f_\beta) \leq m_{j_0-1}$. These facts together yield
that $|A_3| \leq  ((j_0-1)n_{j_0-1})^{s_{j_0}}$.

Using the above along with condition (2) on $j_0$ we conclude that
\begin{equation}
|E_2| < 2|A_3| \leq 2((j_0-1) n_{j_0-1})^{s_{j_0}} <  n_{j_0}/(5d) \label{countingE2}
\end{equation}
Using (\ref{countingE2}) and (\ref{destimateyt}) we have
\begin{equation}
f_3(\frac{1}{n_{j_0}} \sum_{t \in E_2} y_t ) \leq \frac{1}{n_{j_0}} \sum_{t \in E_2} \|y_t\| \leq  \frac{d |E_2|}{n_{j_0}} <\frac{1}{5} \label{firstf3}
\end{equation}
We now pass to the final evaluation.  Let $x_1 = \sum_{t \in E_1} y_t$.  For $\gamma \in \ttt_{f_3}$
let
\begin{equation}
I_\gamma =\{ t \in \{ 1, \ldots , n_{j_0} \} : \supp y_t \subset \range f_\gamma \}.
\end{equation}
Let $\gamma \in \ttt_{f_3}$, we will prove the following:
\begin{enumerate}
\item If $\gamma \in A_3$ then $|f_\gamma( x_1)| \leq |I_\gamma|$.
\item If $\gamma \not \in A_3$ and $f_\gamma$ is weighted then
$$|f_\gamma (x_1)| \leq \frac{2}{w(f_\beta)} |I_\gamma|.$$
\item If $\gamma \not \in A_3$ and $f_\gamma$ is not weighted then
$$|f_\gamma (x_1)| \leq \frac{1}{5} |I_\gamma|.$$
\end{enumerate}
The proof goes by induction (and is similar to the proof of Lemma \ref{SpirosLemma2}).  For the base
case we assume that $\gamma \in A_3$.  Using Remark \ref{groundonyt} we have
$$|f_\gamma ( x_1)| \leq |f_\gamma ( \sum_{t \in I_\gamma} y_t)| \leq |I_\gamma|.$$
Assume that $\gamma \not \in A_3$ and that for all $\gamma'$ with $\gamma \prec \gamma'$
either (1), (2) or (3) holds (depending on $\gamma'$).

Our first case is when $f_\gamma$ is weighted.  Splitting the sum and applying the appropriate induction hypothesis we have
\begin{equation}
\begin{split}
|f_\gamma(x_1)|& \leq \frac{1}{w(f_\gamma)} \sum_{\delta \in S_\gamma\cap A_3} |f_\delta (x_1)| + \frac{1}{w(f_\gamma)} \sum_{\delta \in S_\gamma\setminus A_3} |f_\delta (x_1)| \\
& \leq \frac{1}{w(f_\gamma)}\bigg(  \sum_{\delta \in S_\gamma\cap A_3} |I_\delta| +  \frac{1}{5}\sum_{\delta \in S_\gamma\setminus A_3} |I_\delta|\bigg) \\
& \leq \frac{2}{w(f_\gamma)} |I_\gamma|
\end{split}
\end{equation}
Assuming $\gamma$ is not weighted, we again apply the induction hypothesis to get the desired estimate.
\begin{equation}
\begin{split}
|f_\gamma(x_1)| & \leq  \sum_{\delta \in S_\gamma} |f_\delta (x_1)| \leq \sum_{\delta \in S_\gamma} \frac{2}{w(f_\delta)} |I_\delta| \\
& \leq 2 \max_{\delta \in S_\gamma} |I_\delta| \sum^\infty_{j =1} \frac{1}{m_j} \leq \frac{1}{5} |I_\gamma|.
\end{split}
\end{equation}
The inductive proof is finished.  It follows that
\begin{equation}
|f_3 (\frac{1}{n_j} \sum_{t \in E_1} y_t )| = \frac{1}{n_j} |f_3(x_1)| \leq \frac{|E_1|}{5n_j} \leq \frac{1}{5}. \label{secondf3}
\end{equation}
Combining (\ref{f1}), (\ref{f2}), (\ref{firstf3}) and (\ref{secondf3}) we have
$$|f(x)| \leq |f_1(x)| + |f_2(x)| + |f_3(x)| < \frac{4}{5} <1.$$
This finishes the proof of the proposition.
\end{proof}

We can now prove our main theorem.  Of course, all that is required is to apply our previous work and compose quotient maps.

\begin{proof}[Proof of Theorem \ref{dichotomy}]
Let $X$ be a separable Banach space not admitting an $\ell_1$ spreading model.
By Proposition \ref{separabletobasis} the space $Z_X$ has a basis and does not admit an $\ell_1$
spreading model.  Moreover the map $Q_X: Z_X \to X$ is
a quotient map.  Let $Z_X=Z$.
Define $G_Z$ as in (\ref{groundZ}) and $T_{G_Z,2}$ as above.
Theorem \ref{noellporc0} says that $T_{G_Z,2}$ has no $\ell_p$ or $c_0$ spreading model.
Theorem \ref{thequotient} yields that the map $Q:T_{G_Z,2} \to Z$ is a quotient.
$Q_X \circ Q: T_{G_Z,2} \to X$ is the desired quotient.
\end{proof}

We conclude  with one last proposition that relates to our main theorem.  In particular, we note that there
does not exist a space $Y$ not admitting any $\ell_p$ or $c_0$ as a spreading model and
having, as a quotient, {\em every} space $X$ not admitting an $\ell_1$ spreading model.  In other words,
there is no {\em universal} space satisfying the requirements of our theorem.

\begin{prop}
Suppose $X$ has as a quotient every space not admitting an $\ell_1$ spreading model.  Then $X$ contains a copy of $\ell_1$.
\end{prop}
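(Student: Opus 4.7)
I would argue by contraposition: assume $X$ does not contain a copy of $\ell_1$ and aim to produce a separable space not admitting an $\ell_1$ spreading model which fails to be a quotient of $X$, contradicting the hypothesis. The fundamental tool is the classical projectivity (lifting property) of $\ell_1$: any bounded operator into a quotient $X/M$ lifts to $X$, and the lift of an isomorphic embedding $\ell_1 \hookrightarrow X/M$ is again an isomorphic embedding. Consequently, under the assumption $X \not\supset \ell_1$, no quotient of $X$ can contain a copy of $\ell_1$.

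The first observation is that the hypothesis, combined with Theorem~\ref{dichotomy}, forces $X$ to have an extremely rich family of quotients. Explicitly, for every separable space $Z$ not admitting an $\ell_1$ spreading model, the construction of Theorem~\ref{dichotomy} produces a space $T_{G_Z,2}$ which admits no $\ell_p$ or $c_0$ spreading model and has $Z$ as a quotient; by hypothesis $T_{G_Z,2}$ is a quotient of $X$, and hence so is $Z$. A secondary observation is that if $X$ does not admit an $\ell_1$ spreading model itself, then no quotient of $X$ admits one either, since any bounded lifting of a seminormalized sequence generating an $\ell_1$ spreading model in a quotient inherits the $\ell_1$-lower bound through the quotient map.

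The core of the proof is to combine these observations to force a copy of $\ell_1$ into $X$. The route I would pursue is to exhibit a separable space $Y$ in the class (not admitting $\ell_1$ as a spreading model) whose quotient structure unavoidably reveals the $\ell_1$ basis in $X$. The natural candidate is to take $Y$ of the form $T_{G_Z,2}$ for a carefully chosen $Z$, and then to apply Rosenthal's $\ell_1$ theorem inside $X$: one tries to produce, from the quotient map $X \twoheadrightarrow T_{G_Z,2}$ and the fine structure of the latter, a bounded sequence in $X$ without a weakly Cauchy subsequence, which by Rosenthal's theorem is equivalent to the $\ell_1$ basis.

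The main obstacle is precisely this last step: since any $Y$ containing $\ell_1$ automatically admits an $\ell_1$ spreading model, no single $Y$ in the class is itself $\ell_1$-rich enough to transfer $\ell_1$ to $X$ by mere lifting of an embedding. The argument therefore cannot be reduced to a single application of projectivity; it must exploit the collective effect of having every space in the class as a quotient, presumably via a tree or compactness argument analogous to those driving Theorems~\ref{noellporc0} and~\ref{thequotient}, to assemble from many quotient maps a non-weakly-Cauchy bounded sequence in $X$.
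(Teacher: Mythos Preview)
Your diagnosis of the difficulty is accurate: no single space in the class can witness $\ell_1$ inside $X$ via a one-shot lifting, so the argument must aggregate information from many quotients. But the proposal stops precisely at that point; the final paragraph concedes that the mechanism for this aggregation is unknown, and the phrase ``presumably via a tree or compactness argument'' is a placeholder, not a proof. Rosenthal's theorem by itself does not give a way to assemble a single non--weakly-Cauchy sequence out of an uncountable family of quotient maps.

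The missing idea is an ordinal-index saturation argument based on Bourgain's $\ell_1$-index. One needs the external input (from \cite{AMP}, or concretely the spaces $J^S_{2,1}$ described in the paper) that for every countable ordinal $\xi$ there is a separable space $X_\xi$ with no $\ell_1$ spreading model whose Bourgain $\ell_1$-index exceeds $\omega^\xi$. The $\ell_1$-index is monotone under passing to quotients, for exactly the lifting reason you state: an $\ell_1$-tree in a quotient lifts to an $\ell_1$-tree of the same height in $X$. Hence if each $X_\xi$ is a quotient of $X$, the $\ell_1$-index of $X$ is at least $\omega^\xi$ for all $\xi<\omega_1$, so it equals $\omega_1$, and Bourgain's theorem forces $\ell_1\subset X$. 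This is the ``collective effect'' you were reaching for, but it is an ordinal-rank argument rather than a compactness one, and it rests on a nontrivial construction (the spaces $X_\xi$) that does not emerge from the machinery of Theorems~\ref{noellporc0} and~\ref{thequotient}.
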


\begin{proof}
Recall that if the Bourgain $\ell_1$-index \cite{Bourgainell1}
of a space is unbounded (i.e. equals $\omega_1$) then the space contains $\ell_1$.
The main result of \cite{AMP} states that for each countable ordinal $\xi$ there is a separable space $X_\xi$ that
does not admit an $\ell_1$ spreading model and has hereditary $\ell_1$-index greater than $\omega^{\xi}$.  If a
space $X$ has, as quotient, every space not admitting an $\ell_1$ spreading model it must have
have the space $X_\xi$ as quotient for each $\xi<\omega_1$.  It follows that $X$ must have unbounded Bourgain index.

Looking more closely at the construction of $X_\xi$, one can observe that the {\em ground space} $X_{G_\xi}$ on which $X_\xi$ is built
also does not admit an $\ell_1$ spreading model and has $\ell_1$-index greater than $\omega^{\xi}$ (just not hereditarily).

Finally, we give the reader a concrete example:
Consider the following unconditional James tree space:
Let $J_{2,1}$ to be the completion of $c_{00}(\bt)$
equipped with the norm
\begin{equation} \label{5e15}
\|z\|=\sup\Big\{ \Big( \sum_{i=1}^d \big( \sum_{t\in\seg_i} |z(t)|\big)^{2} \Big)^{1/2} \Big\}
\end{equation}
where the above supremum is taken over all families $(\seg_i)_{i=1}^d$ of pairwise
incomparable non-empty segments of $\bt$.
For every well-founded tree $S$ of natural numbers, let $J^S_{2,1}$ be the closed subspace supported on the coordinates
of $S$.  Using arguments similar to those in \cite{AMP}, for every well-founded tree $S$, the space $J^S_{2,1}$ has no $\ell_1$ spreading
model.  It is easy to see that the Bourgain $\ell_1$ index of $J^S_{2,1}$ is at least the height of the well founded tree $S$.  Arguing as before, we conclude
that any space having each $J^S_{2,1}$ as a quotient must contain $\ell_1$.
\end{proof}

\section{Spaces not admitting quotients with separable duals}

In this section we answer affirmatively a problem posed in \cite[Remark VI]{JR}. 
The problem asks if there exists a separable Banach
space $X$ such that every infinite dimensional quotient has a non
separable dual. We note that the dual of such a space is
closely connected to HI spaces. Indeed, the dual $X^*$ must be non
separable and cannot contain $c_0$, $\ell_1$ or a reflexive subspace. 
Therefore, it does not contain a subspace with an
unconditional basis \cite{JamesUncond}. W. T.
Gowers' dichotomy \cite{Go1} yields that $X^*$ is saturated with HI spaces which do
not contain a reflexive subspaces. Next, we provide some
sufficient conditions for the 
existence of a space answering the
Johnson-Rosenthal question in the affirmative. We note that the sufficient conditions in 
the following theorem
are quite close to being necessary.

\begin{thm}
Let $X$ be a Banach space with the following properties:
\begin{enumerate}
\item $X$ does not contain a reflexive subspace.

\item $X^*$ is separable.

\item $X^{**}$ is hereditarily indecomposable.
\end{enumerate}
Then the dual $Y^*$ of any quotient $Y$ of $X^*$ is
non-separable.\label{theorem21}
\end{thm}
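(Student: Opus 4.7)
The plan is to assume for contradiction that some quotient $Y$ of $X^{*}$ has separable dual $Y^{*}$, and to derive a reflexive subspace of $X$, contradicting hypothesis~(1). Let $q : X^{*} \to Y$ be the quotient map. The adjoint $q^{*} : Y^{*} \to X^{**}$ is an isomorphic embedding onto the $w^{*}$-closed subspace $W := q^{*}(Y^{*}) \subseteq X^{**}$, which is norm-separable and infinite-dimensional; by~(3), $W$ inherits the HI property from $X^{**}$. By~(2), $X^{*}$ is separable, so $X$ does not contain $\ell_{1}$ (otherwise $X^{*} \supseteq \ell_{\infty}$ would be non-separable) and $(B_{X^{**}}, w^{*})$ is compact metrizable.

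Apply the HI property of $X^{**}$ to the two infinite-dimensional subspaces $X$ and $W$ to obtain sequences $(x_{n}) \subseteq S_{X}$ and $(w_{n}) \subseteq S_{W}$ with $\|x_{n} - w_{n}\| \to 0$. Rosenthal's $\ell_{1}$-theorem applied in $X$ yields a weakly Cauchy subsequence of $(x_{n})$; $w^{*}$-metrizability of $B_{X^{**}}$ yields a further subsequence along which $(w_{n})$ is $w^{*}$-convergent to a common $w^{*}$-limit $\xi$, necessarily in $W$ since $W$ is $w^{*}$-closed. Passing to differences $u_{n} := x_{2n} - x_{2n-1}$ and $v_{n} := w_{2n} - w_{2n-1}$ and then to a basic subsequence (Bessaga-Pe{\l}czy\'nski), we obtain a seminormalized weakly null basic sequence $(u_{n}) \subseteq X$ and a corresponding basic sequence $(v_{n}) \subseteq W = Y^{*}$ with $\|u_{n} - v_{n}\| \to 0$; by the small-perturbation principle the map $u_{n} \mapsto v_{n}$ extends to an isomorphism $[u_{n}] \cong [v_{n}]$. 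A direct computation, using the surjectivity of $q^{**} : X^{***} \to Y^{**}$ and the decomposition $X^{***} = X^{*} \oplus X^{\perp}$, shows that $(v_{n})$ is in fact weakly null in $Y^{*}$ (and not merely $\sigma(Y^{*}, Y)$-null).

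The heart of the proof is to conclude that $[v_{n}]$ is reflexive in $W$. The sequence $(v_{n})$ is a weakly null, seminormalized basic sequence in the separable HI dual space $Y^{*}$, and HI prevents any $\ell_{1}$- or $c_{0}$-type sub-block. Combining the HI of $W$ with the dual-space structure of $Y^{*}$ (in particular $w^{*}$-compactness of bounded sets, since $Y$ is separable) one aims to show that $(v_{n})$ is simultaneously shrinking and boundedly complete, forcing $[v_{n}]$ to be reflexive. The isomorphism $[u_{n}] \cong [v_{n}]$ will then transfer reflexivity to $[u_{n}] \subseteq X$, contradicting~(1) and completing the proof.

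The main obstacle will be this final reflexivity step. Neither shrinking nor bounded completeness of $(v_{n})$ is automatic from its being a weakly null basic sequence in a separable HI dual space, and one must exploit the full interaction of all three hypotheses---HI of $X^{**}$, separability of $X^{*}$, and the absence of reflexive subspaces in $X$---to rule out the basic-sequence pathologies that would otherwise prevent reflexivity of $[v_{n}]$.
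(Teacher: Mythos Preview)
Your outline has a genuine gap, and you identify it yourself: the final reflexivity step is not proved but merely hoped for. Saying that ``HI prevents any $\ell_1$- or $c_0$-type sub-block'' and that one ``aims to show that $(v_n)$ is simultaneously shrinking and boundedly complete'' is not an argument. In fact HI does not rule out $\ell_1$ as a subspace (there are HI spaces containing $\ell_1$), and a weakly null basic sequence in a separable HI dual space need not be either shrinking or boundedly complete. Nothing in the hypotheses you have assembled at that point forces $[v_n]$ to be reflexive, so the proof as written does not close.

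The paper avoids this difficulty by front-loading structural information from the Johnson--Rosenthal paper. First, since $Y^{*}$ is separable, $Y$ has a further quotient with a \emph{shrinking} basis; replacing $Y$ by that quotient, the biorthogonal sequence $(y_n^{*})$ is then a \emph{boundedly complete} basis of $Y^{*}$, and hence $q^{*}(y_n^{*})$ is a boundedly complete basic sequence in $X^{**}$. This is the half of reflexivity you were unable to obtain. The HI property of $X^{**}$ is then used exactly as you do, to produce a normalized sequence $(z_n)$ in $X$ equivalent to a block of this sequence, so $(z_n)$ is itself boundedly complete. For the other half, a second result from Johnson--Rosenthal is invoked: since $X^{*}$ is separable, any basic sequence in $X$ has a block sequence that is shrinking. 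Applying this to $(z_n)$ yields a block $(v_n)$ that is both shrinking and boundedly complete, hence spans a reflexive subspace of $X$, contradicting~(1). The moral is that the Johnson--Rosenthal results supply precisely the shrinking/boundedly complete dichotomy that your bare HI-plus-duality argument cannot manufacture on its own.
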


\begin{proof}
Assume on the contrary that  there exists a quotient $Y$ of $X^*$
with $Y^*$ separable. As it is shown in \cite{JR}, $Y$ has a
further quotient with a shrinking basis. Therefore, we assume that $Y$
has a shrinking basis $(y_n)_{n\in\mathbb{N}}$ and that the biorthogonal 
functionals $(y_n^*)_{n\in\mathbb{N}}$ form a boundedly complete basis of
$Y^*$, which is isomorphic to a subspace of $X^{**}$. It follows
that there exists a normalized boundedly complete basic sequence
$(w^{**}_n)_{n\in\mathbb{N}}$ in $X^{**}$. We will show that this
yields a contradiction. Indeed, since $X^{**}$ is HI, there exists
a normalized sequence $(z_n)_{n\in\mathbb{N}}$ in $X$ that is equivalent to a block
sequence of $(w^{**}_n)_{n\in\mathbb{N}}$; hence,
$(z_n)_{n\in\mathbb{N}}$ is also boundedly complete. Since $X^*$
is separable, the sequence $(z_n)_{n\in\mathbb{N}}$ has a further
block sequence $(v_n)_{n\in\mathbb{N}}$ which is normalized and
shrinking \cite{JR}. The sequence $(v_n)_{n\in\mathbb{N}}$ remains
boundedly complete and hence $Z =
\overline{<(v_k)_{k\in\mathbb{N}}>}$ is reflexive. This contradicts
assumption (i).
\end{proof}

\begin{cor}
There exists a separable Banach space $X$ such that every infinite
dimensional quotient has non separable dual.
\end{cor}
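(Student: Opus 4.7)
The plan is to deduce the corollary directly from Theorem \ref{theorem21}. Concretely, I would exhibit a separable Banach space $X$ satisfying conditions (1)--(3) of Theorem \ref{theorem21}, and then the space $X^{*}$ itself is the witness for the corollary: by (2) it is separable, and by Theorem \ref{theorem21} every (infinite dimensional) quotient $Y$ of $X^{*}$ has non-separable dual $Y^{*}$.

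Thus the entire task reduces to producing a single separable Banach space $X$ such that (1) $X$ contains no reflexive subspace, (2) $X^{*}$ is separable, and (3) $X^{**}$ is hereditarily indecomposable. The candidate I would use is the space constructed by Argyros, Arvanitakis, and Tolias in \cite{AAT}. That construction produces a separable $X$ with a shrinking basis (giving (2) automatically), with no reflexive subspace (giving (1)) by incorporating a James-tree-type mechanism into the norming set, and with HI bidual (giving (3)) through a saturation-type construction on the $w^{*}$-extension. The verification of each of these three properties is carried out in \cite{AAT}, so I would simply invoke that work rather than redo the norming-set analysis.

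The main obstacle is therefore not in this section at all but is the existence of the space $X$: the delicate point in \cite{AAT} is combining absence of a reflexive subspace with HI-ness of the bidual, features which are in some tension, since many standard mechanisms for producing HI behavior end up creating reflexive subspaces. Once the \cite{AAT} space is accepted as a black box, the proof of the corollary is the single line: take $X^{*}$ and apply Theorem \ref{theorem21}. I should also briefly note that the statement is understood modulo finite-dimensional quotients (whose duals are trivially separable), as is implicit in Theorem \ref{theorem21}: the proof of the theorem uses a boundedly complete basic sequence in $X^{**}$, which forces the quotient $Y$ under consideration to be infinite dimensional.
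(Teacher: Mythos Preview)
Your proposal is correct and follows exactly the paper's own argument: invoke the space constructed in \cite{AAT}, which satisfies conditions (1)--(3) of Theorem~\ref{theorem21}, and take its dual as the desired example. The paper's proof is the one-line version of what you wrote; your additional remarks on why the \cite{AAT} construction is the delicate point and on the implicit infinite-dimensionality hypothesis are accurate commentary but not extra mathematical content.
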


\begin{proof}
In \cite{AAT} a Banach space $Z$ is constructed satisfying the
assumptions of Theorem \ref{theorem21}. $Z^*$ is the desired
space.
\end{proof}

To conclude, we state the following problem that was communicated to the 
authors by W.B. Johnson.

\begin{ques}
Does every separable space have a quotient which is either HI or has an unconditional basis?
\end{ques}

This problem is a natural analogue of Gowers' dichotomy for quotients. In relation to this problem, V. Ferenczi \cite{FeQuotientDich} proved a dichotomy for quotients of subspaces of Banach spaces. In particular, we recommend section 3 of this paper which contains several interesting questions and observations relating to these types of problems.

%-------------------------------------------------------------------%
%                           Bibliography                            %
%-------------------------------------------------------------------%

\def\cprime{$'$} \def\cprime{$'$} \def\cprime{$'$}

\end{document}